\documentclass[a4paper,12pt]{article}
\usepackage[ansinew]{inputenc}
\usepackage{amsfonts}
\usepackage{latexsym}
\usepackage{amsmath}
\usepackage{amssymb}
\usepackage{eepic}
\usepackage{graphicx}
\usepackage{float}
\usepackage{pstricks}
\usepackage{epic,eepic}

 \usepackage{xcolor}
\usepackage{tikz}
\usetikzlibrary{arrows,shapes,chains}

\textwidth161mm \textheight242mm
\addtolength{\hoffset}{-0.9cm}
\addtolength{\voffset}{-1.5cm}

\setlength\arraycolsep{2pt}
\newcommand{\veps}{\varepsilon}
\newcommand{\R}{\mathbb{R}}

\newcommand{\C}{\mathbb{C}}
\newcommand{\N}{\mathbb{N}}
\newcommand{\Z}{\mathbb{Z}}

\newcommand{\Ce}{\mathcal{C}}

\newcommand{\co}{\operatorname{co}}

\newcommand{\E}{\mathcal{E}}

\newtheorem{defin}{Definition}[section]

\newtheorem{theorem}[defin]{Theorem}

\newtheorem{exa}{Example}

\newtheorem{lemma}[defin]{Lemma}
\newtheorem{corollary}[defin]{Corollary}
\newenvironment{proof}
{\noindent{\it Proof.}}{\hfill $\Box$\par\vspace{2.5mm}}
\newenvironment{remark}
{\par\vspace{2.5mm}\noindent{\bf Remark.}}{\par\vspace{2.5mm}}

\newtheorem{que}{Question}

\newtheorem{pro}{Problem}

\numberwithin{equation}{section}

\makeatletter
\renewcommand{\ps@myheadings}{%
\renewcommand{\@evenhead}%
{{\rm\thepage}\hfil{\sc  J.~M.~Heittokangas and Z.-T.~Wen}\hfil}%
\renewcommand{\@oddhead}%
{\hfil{{\sc Zero distribution theory for exponential polynomials}\hfil{\rm\thepage}}}%
\renewcommand{\@evenfoot}{}%
\renewcommand{\@oddfoot}{}%
}\makeatother \pagestyle{myheadings}

\setlength{\evensidemargin}{19pt} \setlength{\oddsidemargin}{19pt}

\title{\bf\Large Generalization of P\'olya's zero distribution theory for exponential polynomials, plus sharp results for asymptotic growth}
\author{Janne M.~Heittokangas
and Zhi-Tao Wen\footnote{Corresponding author.}
   }
\date{}
\begin{document}
\maketitle

\begin{abstract}
An exponential polynomial of order $q$ is an entire function of the form
    $$
    f(z)=P_1(z)e^{Q_1(z)}+\cdots +P_k(z)e^{Q_k(z)},
    $$
where the coefficients $P_j(z),Q_j(z)$ are polynomials in $z$ such that
    $$
    \max\{\deg(Q_j)\}=q.
    $$
In 1977 Steinmetz proved that the zeros of $f$ lying outside of finitely many logarithmic strips
around so called critical rays have exponent of convergence $\leq q-1$. This result does not
say anything about the zero distribution of $f$ in each individual logarithmic strip. Here, it is shown
that the asymptotic growth of the non-integrated counting function of zeros of $f$ is asymptotically comparable to $r^q$ in each logarithmic strip. The result generalizes the first order results by P\'olya
and Schwengeler from the 1920's, and it shows, among
other things, that the critical rays of $f$ are precisely the Borel
directions of order $q$ of $f$. The error terms in the asymptotic equations for $T(r,f)$ and $N(r,1/f)$ originally due to Steinmetz are also improved.

\medskip
\noindent
\textbf{Key Words:}
Asymptotic growth, critical ray, error term, exponential polynomial,
logarithmic strip, number of zeros, value distribution.

\medskip
\noindent
\textbf{2010 MSC:} 30D15, 30D35.
\end{abstract}

\renewcommand{\thefootnote}{}
\footnotetext[1]{Heittokangas was partially supported by the Academy of Finland \#268009
and the Magnus Ehrnrooth Foundation. Wen was partially supported by the National Natural Science Foundation of China  (No.~11771090) and the STU Scientific Research Foundation for Talents.}

\section{Background on exponential sums}

An exponential sum is an entire function of the form
    \begin{equation}\label{expsum}
    f(z)=P_0(z)e^{w_0z}+\cdots+P_n(z)e^{w_mz},
    \end{equation}
where the coefficients $P_j(z)$ are polynomials such that $P_j(z)\not\equiv 0$ for $j=1,\ldots,m$. If $P_0(z)\not \equiv 0$, we set $w_0=0$.
The constants $w_j\in\C$ are pairwise distinct, and they are called the leading coefficients of $f$.  Thus $w_j\neq 0$ for $j=1,\ldots,m$.

Every exponential sum (or more generally every exponential polynomial,
see \eqref{exp-poly} below) satisfies a linear differential equation with polynomial coefficients \cite{VPT}.
In the literature, exponential sums can also be found as solutions to
differential-difference
equations \cite{Dick0,Shapiro}, and even to linear differential equations of infinite order \cite{Dick0}.

The functions $\sin z= \frac{1}{2i}\left(e^{iz}-e^{-iz}\right)$
and $\cos z= \frac{1}{2}\left(e^{iz}+e^{-iz}\right)$ are typical
examples of exponential sums, their quotient being $\tan z$.
In 1929 Ritt \cite{Ritt} showed that if the
ratio $g$ of two exponential sums with constant coefficients is
entire, then $g$ is an exponential sum also. The proof of this
result was simplified by Lax in 1949 \cite{Lax}. In 1958 Shapiro
\cite{Shapiro} conjectured that if two exponential sums with polynomial
coefficients have infinitely many zeros in common, then they
are both multiples of some third exponential sum. This claim
is known as Shapiro's conjecture, and it remains unsolved.

We note that distribution of $a$-points and zero distribution of exponential sums are essentially the same thing because
$f$ is an exponential sum if and only if $f-a$ is.
Zero distribution of functions $f$ of the form \eqref{expsum} has been investigated for roughly a century by several authors,
starting from the case where the coefficients $P_j$ are constants and the leading coefficients $w_j$ are real and commensurable. The latter means that $w_j=\alpha p_j$,
where $\alpha\in\R$ and $p_j\in\Z$. The early developments, with references,
are surveyed in \cite{Langer}.

Zero distribution in the general case
is based on the work of P\'olya \cite{Poly,Poly2} and
Schwengeler \cite{Sch}. We summarize these findings in Theorem~A below,
for which Dickson \cite{Dick} has given a generalization in the sense
that the coefficients are asymptotically polynomials. Before stating
the actual result, we need to agree on the notation.

If $P_0(z)\not\equiv 0$, set $W_f=\{\overline{w}_0=0,\overline{w}_1,\ldots,\overline{w}_m\}$, while if $P_0(z)\equiv 0$, set $W_f=\{\overline{w}_1,\ldots,\overline{w}_m\}$. Moreover, set $W_f^0=W_f\cup\{0\}$. Thus, if $P_0(z)\not\equiv 0$, then $W_f=W_f^0$. For any finite set $G\subset\C$,
let $C(\co(G))$ denote the circumference of the convex hull $\co(G)$.
Around a given ray $\arg(z)=\varphi$, we define a logarithmic strip
    \begin{equation}\label{l-strip}
	\Lambda(\varphi,c)
	=\left\{re^{i\theta}: r>1,\, |\theta-\varphi|<c\frac{\log r}{r}\right\},
	\end{equation}
where $c>0$. Finally, we let $\theta^\bot$ denote
the argument of a ray parallel to one of the outer normals of
$\co\left(W_f\right)$. Thus the ray $\arg(z)=\theta^\bot$ is orthogonal
to a line passing through one of the sides of the polygon
$\co\left(W_f\right)$.

\bigskip
\noindent
\textbf{Theorem A}
\textnormal{(\cite{Dick,Poly,Poly2,Sch})} 
\emph{Let $f$ be an exponential sum of the form \eqref{expsum}.
Then $f$ has at most finitely many zeros outside
of the finitely many domains $\Lambda(\theta^\bot,c)$. Moreover,
let $\overline{w}_j,\overline{w}_k$ be two consecutive vertex points of
$\co\left(W_f\right)$, and let $\theta^\bot_0$ denote the argument of the
ray parallel to the outer normal of $\co\left(W_f\right)$ corresponding to its side
$[\overline{w}_j,\overline{w}_k]$. Then the
number $n(r,\Lambda)$ of zeros of $f$ in $\Lambda(\theta^\bot_0,c)\cap \{|z|<r\}$ satisfies}
    \begin{equation}\label{nPS}
    n(r,\Lambda)=|w_j-w_k|\frac{r}{2\pi}+O\left(1\right).
    \end{equation}

By summing over all sides of $\co(W_f)$, we get
$\sum |w_j-w_k|=C(\co(W_f))$. Hence Theorem~A yields
the asymptotic equations
    \begin{eqnarray}
    n(r,1/f) &=& C(\co(W_f))\frac{r}{2\pi}+O\left(1\right),\label{nq1}\\
    N(r,1/f) &=& C(\co(W_f))\frac{r}{2\pi}+O\left(\log r\right)\label{Nq1},
    \end{eqnarray}
where $r\to\infty$ without an exceptional set. Finally, Wittich
\cite{W1} has proved the asymptotic equation
    \begin{equation}\label{Tq1}
    T(r,f)=C(\co(W_f^0))\frac{r}{2\pi}+o\left(r\right).
    \end{equation}

Exponential sums can be generalized to exponential polynomials, which
are higher order entire functions of the form
    \begin{equation}\label{exp-poly}
    f(z)= P_0(z)e^{Q_0(z)} + \cdots + P_n(z)e^{Q_n(z)},
    \end{equation}
where $P_j(z)$, $Q_j(z)$ are polynomials for $0 \leq j \leq n$ .
A polynomial can be considered as a special case of an exponential polynomial.
We assume that the polynomials $Q_j(z)$ are pairwise different and
normalized such that $Q_j(0)=0$. This convention forces the
functions $g_j(z)=P_j(z)e^{Q_j(z)}$ to be linearly independent.
Let $q=\max\{\deg(Q_j)\}$ denote the order of $f$. As observed in \cite{Stein1},
we may then write $f$ in the normalized form
	\begin{equation}\label{standard-f}
    f(z)=H_0(z)e^{w_0z^q}+H_1(z)e^{w_1z^q}+\cdots  +H_m(z)e^{w_mz^q},
    \end{equation}
where the functions $H_0(z),H_1(z),\ldots,H_m(z)$ are nonvanishing exponential
polynomials of order $\leq q-p$ for some $1\leq p\leq q$, and $m\leq n$. The leading coefficients $w_j (0\leq j\leq m)$
are pairwise distinct and nonzero, except possibly $w_0$. Thus $H_0(z)\equiv 0$ precisely when
$q=\deg(Q_j)$ holds for every $j$.

Steinmetz \cite{Stein1} has proved analogues of \eqref{Nq1}
and \eqref{Tq1} for exponential polynomials. We will improve the error terms in these asymptotic
equations. So far there has been no attempt on finding a higher order
analogue of \eqref{nPS}. The main focus of this paper is to find one.
Then a higher order analogue of \eqref{nq1} will be found in two different
ways as simple consequences.

The main results are stated and further motivated
in Sections 2--3. The proofs of the main results are given in Sections~4--8.


\section{Zero distribution}


So far there has been no attempts to prove a higher order analogue
of Theorem A. The major part of this paper focuses in
proving such a result. Before stating the actual result, we need
to recall a few concepts.

If $f$ is an exponential polynomial of the form \eqref{standard-f}, then its Phragm\'en-Lindel\"of indicator function
	$$
	h_f(\theta)=\limsup_{r\to\infty}r^{-q}\log |f(re^{i\theta})|
	$$
takes the form
	\begin{equation*}
	h_f(\theta)=\max_j\left\{\Re \left(w_je^{iq\theta}\right)\right\},
	\end{equation*}
see \cite[p.~993]{GOP}. The indicator $h_f(\theta)$ is $2\pi$-periodic and differentiable everywhere except for cusps $\theta^*$ at which
	\begin{equation}\label{cusp}
	h_f(\theta^*)=\Re \left(w_je^{iq\theta^*}\right)=\Re \left(w_ke^{iq\theta^*}\right)
	\end{equation}
for a pair of indices $j,k$ such that $j\neq k$.
These cusps are called the critical angles for $f$, and the corresponding
rays are called critical rays for $f$ \cite{HITW, Stein1}.

An alternative way to find the critical angles is by
means of the convex hull $\co(W_f)$ of the conjugated leading
coefficients of $f$. Namely, let $\arg(z)=\theta^\bot_{j,k}$
be the ray that is parallel to the outer normal of $\co(W_f)$
determined by two successive
vertex points $\overline{w}_j,\overline{w}_k$ of $\co(W_f)$. Then the rays
    \begin{equation}\label{orthogonal}
    \arg(z)=(\theta^\bot_{j,k}+2\pi n)/q,\quad n\in\Z,
    \end{equation}
are critical rays for $f$, see \cite[Lemma~3.2]{HITW}.
Thus, if $\co(W_f)$ has $s$ sides/vertices, then $f$ has $sq$ critical rays all together. In the particular case when $q=1$ the critical rays and the orthogonal rays are one and the same.

Let $f$ be given in the normalized form \eqref{standard-f}, where
we suppose that $\rho(H_j)\leq q-p$ for $1\leq p\leq q$. The case $p=1$ corresponds to the standard case,
while the identity $p=q$ reduces the coefficients $H_j(z)$ to be polynomials. In \cite{HITW} it is proved that most of the zeros of $f$
are inside of modified logarithmic strips
	$$
	\Lambda_p(\theta^*,c)=\left\{z=re^{i\theta} : r>1,\ |\arg(z)-\theta^*|<c\frac{\log r}{r^p} \right\},
	$$
where $c>0$ and $\arg(z)=\theta^*$ represents one of the critical rays for $f$.
For $p\geq 2$ these domains approach the corresponding critical rays asymptotically, which does not
happen in the standard case $p=1$, see \cite[Section~2]{HITW}.
More precisely, let $N_{\Lambda_p}(r)$ denote the integrated counting function of the zeros of $f$ in $|z|\leq r$ lying outside of the union of the finitely many domains $\Lambda(\theta^*,c)$. Then
	\begin{equation}\label{Stein-thm-p}
	N_{\Lambda_p}(r)=O(r^{q-p}+\log r).
	\end{equation}
The case $p=1$ was proved earlier in \cite{Stein1}, while partial results for
the general case were proved in \cite{GM,MacColl}. For
\eqref{Stein-thm-p} to hold, the constant $c>0$ needs to be large
enough (depending on $f$). In what follows, we will assume that this
is always the case without further notice whenever discussing the
domains $\Lambda_p(\theta^*,c)$.

\begin{figure}[h]\label{Zerodomain}
 \begin{center}
    \begin{tikzpicture}[scale=0.7]
    \draw[->](-2,0)--(13,0)node[left,below]{$x$};
    \draw[->](0,-4)--(0,4)node[right]{$y$};
    \draw[thick,domain=1:11,smooth] plot(\x,{10*ln(\x)/\x^1.5})node[right]{HITW};
    \draw[domain=1:11] plot(\x,{0.9*ln(\x)})node[right]{Schwengeler};
    \draw[-,dashed][domain=0:11] plot(\x,{0.27*\x}) node[right]{P\'{o}lya};
    \draw[thick,domain=1:11,smooth] plot(\x,{-10*ln(\x)/\x^1.5});
    \draw[domain=1:11] plot(\x,{-0.9*ln(\x)});
    \draw[-,dashed][domain=0:11] plot(\x,{-0.27*\x});
    \end{tikzpicture}
    \end{center}
	\begin{quote}
    \caption{Assuming that the positive real axis is a critical ray for $f$,
    the zeros of $f$ have been considered in different regions as follows:
    P\'olya in sectors $|\arg(z)|<\veps$,
    Schwengeler in logarithmic strips $|\arg(z)|\leq c \frac{\log |z|}{|z|}$, and
    Heittokangas-Ishizaki-Tohge-Wen in modified logarithmic strips $|\arg(z)|\leq c
    \frac{\log |z|}{|z|^p}$. Here $\veps>0$ and $c>0$ are real, and $p\geq 1$
    is an integer.}
	\end{quote}
    \end{figure}
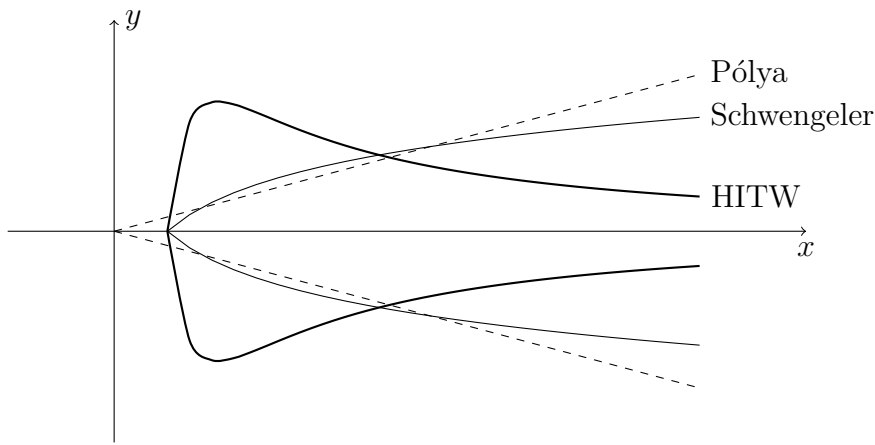

The discussion above says nothing about the zero distribution in each individual modified logarithmic strip $\Lambda_p$.
This motivates us to state and prove the following generalization of Theorem~A.

\begin{theorem}\label{length}
Let $f$ be an exponential polynomial of the form \eqref{standard-f}, where the coefficients $H_j(z)$
are exponential polynomials of growth $\rho(H_j)\leq q-p$ for some integer $1\leq p\leq q$.
Then the number $n_{\lambda_p}(r)$ of zeros of $f$ outside of the finitely many domains $\Lambda_p(\theta^*,c)$ satisfies
	$$
	n_{\lambda_p}(r)=O\left(r^{q-p}+\log r\right).
	$$
Moreover, let $\overline{w}_j,\overline{w}_k$ be two consecutive vertex points of $\co\left(W_f\right)$,
and let $\theta^\bot_0$ denote the argument of the ray parallel to the outer normal of $\co\left(W_f\right)$
corresponding to its side $[\overline{w}_j,\overline{w}_k]$.
Let $\veps>0$, and let $\theta^*$ be any critical angle for $f$ associated
with the angle $\theta^\bot_0$ by means of \eqref{orthogonal}.
Then the number of zeros $n(r,\Lambda_p)$ of $f$ in
$\Lambda_p(\theta^*,c)\cap \{|z|<r\}$ satisfies
    \begin{equation}\label{zeros-n}
    n(r,\Lambda_p)=|w_j-w_k|\frac{r^q}{2\pi}
    +O\left(r^{q-p}\log^{3+\veps}r\right).
    \end{equation}
\end{theorem}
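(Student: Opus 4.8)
The plan is to follow the classical P\'olya--Schwengeler route, but carried out in the logarithmic strip attached to the critical ray rather than in a sector, and with careful bookkeeping of lower-order terms so that the error term $O(r^{q-p}\log^{3+\veps}r)$ comes out. By a rotation $z\mapsto e^{i\alpha}z$ (which multiplies each $w_j$ by $e^{iq\alpha}$ and permutes the strips in a controlled way) we may assume the side $[\overline w_j,\overline w_k]$ of $\co(W_f)$ has outer normal pointing in the direction $\theta^\perp_0=0$, so that the critical ray under study is the positive real axis ($\theta^*=0$) and, after relabelling, $\Re(w_j)=\Re(w_k)=\max_\ell\Re(w_\ell)$ with all other $w_\ell$ having strictly smaller real part, and the segment $[\overline w_j,\overline w_k]$ is vertical. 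Writing $f$ in the normalized form \eqref{standard-f} and factoring out $e^{w_jz^q}$ on the strip, one gets $f(z)=e^{w_jz^q}\bigl(H_j(z)+H_k(z)e^{(w_k-w_j)z^q}+\sum_{\ell\neq j,k}H_\ell(z)e^{(w_\ell-w_j)z^q}\bigr)$. On $\Lambda_p(0,c)$ the exponents $(w_\ell-w_j)z^q$ for $\ell\neq j,k$ have real part tending to $-\infty$ like $-\delta r^q$ for some $\delta>0$ (this is exactly the content of the indicator being a proper cusp at $\theta^*$), so those terms are exponentially small; the real competition is between the two ``dominant'' terms $H_j$ and $H_k e^{(w_k-w_j)z^q}$, where $w_k-w_j=i\beta$ for some real $\beta\neq 0$ (the segment being vertical), and $|w_j-w_k|=|\beta|$.

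**The main step** is a winding-number / argument-principle count for the function $g(z)=H_j(z)+H_k(z)e^{i\beta z^q}$ on the strip $\Lambda_p(0,c)$. I would bound $n(r,\Lambda_p)$ above and below by counting zeros of $g$ in the truncated strip $\Lambda_p(0,c)\cap\{|z|<r\}$ via $\frac{1}{2\pi i}\oint d\log g$ over the boundary of that region, estimating $\Delta\arg g$ separately on the two circular-arc pieces $|z|=r_0$ (bounded contribution) and $|z|=r$, and on the two ``long sides'' $\theta=\pm c\frac{\log r'}{(r')^p}$. On the long sides one shows $|H_j|$ and $|H_k e^{i\beta z^q}|$ are of comparable size only near isolated points, but the point is that the choice $|\theta|=c\frac{\log r}{r^p}$ makes $|\Re(i\beta z^q)|=|\beta|\,r^q|\sin(q\theta)|+O(r^{q-1}\cdot r\theta^2)\approx |\beta| qc\,r^{q-p}\log r$, so that $|e^{i\beta z^q}|$ is, say, $\geq r^{Nc}$ on the outer side and $\leq r^{-Nc}$ on the inner side for a fixed $N$; choosing $c$ large enough (which is exactly the hypothesis in force, cf.\ the remark after \eqref{Stein-thm-p}) makes $g$ dominated by one term on each side, so $\Delta\arg g$ there is $O(\log^{1+\veps}r)$-type after accounting for the polynomial-like factors $H_j,H_k$ and their own argument variation (here one invokes $\rho(H_\ell)\le q-p$: each $H_\ell$ being an exponential polynomial of order $\le q-p$ has $\log|H_\ell(z)|=O(r^{q-p})$ and $T(r,H_\ell)=O(r^{q-p})$, so the variation of $\arg H_\ell$ along a long arc is controlled by $O(r^{q-p}\log r)$ by a standard Cartan-type lemma, up to the $\log^{\veps}$ slack for a small exceptional set of radii). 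The dominant contribution comes from $|z|=r$: there $\Delta\arg(e^{i\beta z^q})=\Delta\arg(e^{i\beta r^qe^{iq\theta}})$ over $|\theta|<c\frac{\log r}{r^p}$, and $\Re(i\beta z^q)$ sweeps through roughly $\beta r^q\cdot(2\cdot c\frac{\log r}{r^p}\cdot q)=2qc\beta r^{q-p}\log r$ while $\Im(i\beta z^q)=\beta r^q\cos(q\theta)$ changes by $O(r^{q-2p}\log^2 r)$, so that the number of half-turns of $e^{i\beta z^q}$ is $\approx \frac{\beta r^q(2c q\log r/r^p)}{2\pi}$... wait — that is not of size $r^q$. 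The correct accounting is the opposite: it is along the \emph{long sides}, traversed from $|z|=r_0$ out to $|z|=r$, that $e^{i\beta z^q}$ winds, because there $\Im(i\beta z^q)=-\beta (r')^q\sin(q\theta)\approx -\beta q c\,(r')^{q-p}\log r'$ stays small while $\Re(i\beta z^q)=-\beta(r')^q\cos(q\theta)\approx -\beta (r')^q$ runs monotonically from $-\beta r_0^q$ to $\approx-\beta r^q$... still not the phase. Let me restate cleanly: one side has $\theta\approx 0^+$, so $z^q\approx (r')^q$ is essentially real, $e^{i\beta z^q}\approx e^{i\beta(r')^q}$ genuinely winds $\frac{\beta r^q}{2\pi}+O(r_0^q)$ times as $r'$ goes from $r_0$ to $r$, giving the leading term $\frac{|\beta|r^q}{2\pi}=|w_j-w_k|\frac{r^q}{2\pi}$; the deviation of $\theta$ from $0$ on that side contributes a correction of size $O(r^{q-p}\log r)$ to the phase, while on the opposite side $\theta\approx 0^-$ the same happens with the same winding, and on the circle $|z|=r$ the phase $\beta r^q e^{iq\theta}$ only moves through an arc of angular extent $O(r^{q-p}\log r)$ so contributes $O(r^{q-p}\log r)$ windings. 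Summing the four boundary pieces and dividing by $2\pi$ yields \eqref{zeros-n}, with the $\log^{3+\veps}r$ absorbing: one $\log r$ from the strip width, one $\log r$ from $T(r,H_\ell)\lesssim r^{q-p}$ turned into argument variation via a lemma on $|\arg H_\ell|$, and a $\log^{1+\veps}r$ (or two halves) from discarding a thin exceptional set of radii where some $H_\ell$ or $g$ is untypically small — the standard device being to apply the estimates not at $r$ itself but at a nearby $r'\in[r,2r]$ with $|g(r'e^{i\theta})|$ not too small, then control the difference $n(r,\Lambda_p)-n(r',\Lambda_p)=O(r^{q-p}\log r)$ directly.

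**The first part** of the theorem — the global bound $n_{\lambda_p}(r)=O(r^{q-p}+\log r)$ for zeros outside all the strips — I would obtain from \eqref{Stein-thm-p} together with a standard comparison between the non-integrated and integrated counting functions: since the zeros outside the strips are sparse, $n_{\lambda_p}(r)\le \frac{1}{\log 2}\int_r^{2r}\frac{n_{\lambda_p}(t)}{t}\,dt\le\frac{1}{\log 2}N_{\Lambda_p}(2r)+O(1)=O(r^{q-p}+\log r)$, using that $N_{\Lambda_p}$ counts a superset with the stated bound \eqref{Stein-thm-p} (and that $q-p\ge 0$ so $r\mapsto r^{q-p}$ is increasing, or $p=q$ in which case the $\log r$ term dominates). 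This is routine; the substance of the theorem is the asymptotic \eqref{zeros-n}.

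**The hard part** will be making the argument-principle estimate on the two long sides of the truncated strip genuinely sharp enough to land at $\log^{3+\veps}r$ rather than a cruder power of $\log r$ or an extra $r^{q-p}$ factor: one must show that $g(z)=H_j(z)+H_k(z)e^{i\beta z^q}$ really is dominated by a single term everywhere on each long side except on arcs of total length $O(1)$ (not $O(\log r)$), which requires that the ``balance points'' where $|H_j|\asymp|H_k e^{i\beta z^q}|$ are separated by gaps of order $\sim r^{-q+p}/\log r$-in-$\theta$ but there are only $O(r^{q-p}\log r)$ of them along the whole side, each contributing $O(1)$ to $\Delta\arg g$; and one must handle the possibility that $H_j,H_k$ share zeros or are themselves exponential polynomials with their own critical structure inside $\Lambda_p$, which is where the hypothesis $\rho(H_\ell)\le q-p$ and an induction on $q$ (or on $q-p$) may be needed — applying the already-proved parts of the theorem to the lower-order exponential polynomials $H_\ell$ to count \emph{their} zeros in the strip and subtract them off. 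The cleanest route is likely induction on $q-p$: for $p=q$ the $H_\ell$ are honest polynomials and the whole estimate is the classical Schwengeler computation with $z$ replaced by $z^q$, yielding error $O(\log^{1+\veps}r)$ or so; for $p<q$ one feeds the inductive hypothesis on the $H_\ell$'s (of order $q-p\ge1$) into the boundary estimate, and the extra logarithmic factors accumulate additively, capped by $3+\veps$.
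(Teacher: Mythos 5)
Your overall architecture is the same as the paper's: apply the argument principle to the boundary of the truncated strip, extract the main term $|w_j-w_k|\,r^q/(2\pi)$ from the winding of the single dominant exponential term along each of the two long sides, control the coefficients $H_\ell$ through their logarithmic derivatives (your ``induction on $q-p$'' is exactly what the paper packages into Lemma~\ref{logderivative-polynomial}), and keep the contour away from small discs around the zeros of $f$ and of all descendant coefficients by a measure-theoretic choice of $c$ (Lemma~\ref{meet-lemma}). The first assertion is handled identically. However, two steps in your sketch are genuinely gapped.

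First, the short outer arc $|z|=r$. Your claim that it contributes $O(r^{q-p}\log r)$ windings because ``the phase $\beta r^qe^{iq\theta}$ only moves through an arc of angular extent $O(r^{q-p}\log r)$'' is not a proof: that arc crosses the critical ray, where the two competing terms are of comparable size, $g$ may have many zeros arbitrarily close to the arc, and $\Delta\arg g$ cannot be read off from the phase of $e^{i\beta z^q}$ alone. The paper's mechanism is Gundersen's pointwise logarithmic-derivative estimate (Lemma~\ref{ld-lemma}), which needs the a priori global count $n(r,1/f)=O(r^q)$ --- this is Corollary~\ref{n-cor}, derived from Steinmetz's asymptotics for $N(r,1/f)$ and explicitly listed as a prerequisite of the theorem, an input your proposal never secures. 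Integrating that estimate over the arc of length $O(r^{1-p}\log r)$ gives $O(r^{q-p}\log^{2+\alpha}r)$, and the exceptional set of radii of finite logarithmic measure is then removed by the Tauberian Lemma~\ref{T-lemma}; your alternative of moving to a nearby good radius $r'$ and bounding $n(r,\Lambda_p)-n(r',\Lambda_p)=O(r^{q-p}\log r)$ ``directly'' is circular as stated, since bounding the zeros in a thin slice of the strip to that precision is a local version of the estimate being proved. Second, your reduction to the two-term function $g=H_j+H_ke^{i\beta z^q}$ presumes that every other $\Re(w_\ell)$ is strictly smaller, i.e.\ that no other $\overline w_\ell$ lies on the supporting line through the side $[\overline w_j,\overline w_k]$; the paper's example \eqref{example-f} shows this fails in general, and the discarded middle terms are only polynomially small inside the strip, not $O(e^{-\delta r^q})$, so the zeros of your $g$ need not match those of $f$ there. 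The repair is to keep $g=e^{-w_jz^q}f$ exactly (or integrate $f'/f$ as the paper does): on the two boundary curves a single vertex term still dominates by Lemma~\ref{indicator-lemma}, so the main term survives, but as written your count is of the wrong function.
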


We make some comments regarding Theorem~\ref{length}, and discuss some
immediate consequences of it.
First, according to \eqref{cusp}, at least two leading coefficients are needed to induce a critical ray.
The example
	\begin{equation}\label{example-f}
	f(z)=\sum_{j=0}^n e^{jz},\quad n\geq 2,
	\end{equation}
having critical rays at $\arg(z)=\pm \pi/2$ shows that
more than two leading coefficients can induce the same critical ray. It follows that the conjugate of any leading coefficient
inducing a critical ray must be a boundary point of
$\co\left(W_f\right)$. Indeed, if \eqref{cusp} holds, then
    $$
    \Re\left(w_je^{iq\theta^*}\right)
    =\Re\left(w_ke^{iq\theta^*}\right)
    \geq\max_{l\neq k,j}\Re\left(w_le^{iq\theta^*}\right).
    $$
From \eqref{orthogonal} we get that $\arg(z)=q\theta^*$ is an
orthogonal ray for a line through the points
$\overline{w}_j$ and $\overline{w}_k$. Hence this line must
be a support line for $\co(W_f)$, and so $\overline{w}_j$ and
$\overline{w}_k$ are boundary points of $\co(W_f)$.
However, being a boundary point of $\co\left(W_f\right)$ is not enough. In proving Theorem~\ref{length} it is
necessary that in between two consecutive $\Lambda_p$-domains precisely one
exponential term is dominant. This property is explained in more
detail in Lemma~\ref{one-leading-term} below. Only those exponential
terms that are associated with vertex points of $\co\left(W_f\right)$
have this property. For the function $f$ in \eqref{example-f}, the
dominant term in the right half-plane is $e^{nz}$, while the dominant
term in the left half-plane is $1$.

Second, keeping in mind that $\overline{w}_j,\overline{w}_k$ are two consecutive vertex points of $\co\left(W_f\right)$,  we get $\sum |w_j-w_k|=C(\co(W_f))$, where the summation is taken over all sides of $\co(W_f)$.
From \eqref{orthogonal} we see that the orthogonal ray
associated with the pair $\overline{w}_j,\overline{w}_k$ induces $q$
critical rays. This gives raise to the asymptotic equation
	\begin{equation}\label{n-asy}
    n(r,1/f) = qC(\co(W_f))\frac{r^q}{2\pi}
    +O\left(r^{q-p}\log^{3+\veps}r\right).
    \end{equation}

Third, since a domain $\Lambda_p(\theta^*,c)$ for any $p$ is essentially
contained in an arbitrary $\veps$-angle $\{z : |\arg(z)-\theta^*|<\veps\}$, it follows that the critical rays of $f$ are precisely the Borel directions of order $q$ of $f$. Note that $f$ has no Borel directions
of order $\rho\in (q-p,q)$. In the case $q=1$ the critical rays
of $f$ (which are the same as the orthogonal rays of $f$) are
precisely the Julia directions for $f$ in a strong sense. Indeed,
for any $a\in\C$, $f$ has at most finitely many $a$-points outside
of the domains $\Lambda_1(\theta^*,c)$ by Theorem A.

Finally, it seems that the best possible error term in \eqref{zeros-n} might be $O\left(r^{q-p}\right)$, but our method does not give this.


\section{Asymptotic growth of $T(r,f)$ and $N(r,1/f)$}\label{main-results-sec}


For an exponential polynomial $f$ of the normalized form
\eqref{standard-f}, Steinmetz \cite{Stein1} has proved the asymptotic equations
	\begin{eqnarray}
	N\left(r,\frac{1}{f}\right)&=&C(\co(W_f))\frac{r^q}{2\pi}+o(r^q),\label{N}\\	
	T(r,f)&=&C(\co(W_f^0))\frac{r^q}{2\pi}+o(r^q).\label{T}
	\end{eqnarray}
These are higher order analogues of \eqref{Nq1} and \eqref{Tq1}.
The next result improves the error term in \eqref{T} even in the
standard case $p=1$. It is obvious that the logarithmic term in \eqref{T-eqn} below is only needed when $q=p$, that is, when the coefficients $H_j(z)$ are ordinary polynomials.

\begin{theorem}\label{T-theorem}
Let $f$ be an exponential polynomial in the normalized form \eqref{standard-f}, where
we suppose that $\rho(H_j)\leq q-p$ for $1\leq p\leq q$. Then
    \begin{equation}\label{T-eqn}
    T(r,f)=C(\co(W_f^0))\frac{r^q}{2\pi}+O\left(r^{q-p}+\log r\right).
    \end{equation}
\end{theorem}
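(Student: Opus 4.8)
Since $f$ is entire, $T(r,f)=m(r,f)=\frac1{2\pi}\int_0^{2\pi}\log^+|f(re^{i\theta})|\,d\theta$, and the plan is to bound $m(r,f)$ from above and below by $\frac{r^q}{2\pi}\int_0^{2\pi}h_f^+(\theta)\,d\theta$ with error $O(r^{q-p}+\log r)$, and then to evaluate this integral geometrically. For the evaluation, $h_f(\theta)=\max_j\Re(w_je^{iq\theta})$ is the value at $e^{iq\theta}$ of the support function of $\co(W_f)$, hence $h_f^+(\theta)$ is the value at $e^{iq\theta}$ of the support function of $\co(W_f^0)$. As $\theta\mapsto q\theta$ wraps the unit circle $q$ times, the substitution $\phi=q\theta$ gives $\int_0^{2\pi}h_f^+(\theta)\,d\theta=\int_0^{2\pi}H_{\co(W_f^0)}(\phi)\,d\phi$, which by Cauchy's perimeter formula for a plane convex body equals $C(\co(W_f^0))$. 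So the target is $m(r,f)=C(\co(W_f^0))\frac{r^q}{2\pi}+O(r^{q-p}+\log r)$.

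For the upper bound I would apply the triangle inequality together with $\Re(w_jz^q)\le h_f(\theta)r^q$ to obtain
	$$
	\log|f(re^{i\theta})|\le h_f(\theta)r^q+\max_j\log^+|H_j(re^{i\theta})|+O(1),
	$$
and then use that each $H_j$ is entire of order $\le q-p$ (a non-negative integer), so $\log^+|H_j(z)|=O(r^{q-p})$ if $q>p$ and $O(\log r)$ if $q=p$, uniformly on $|z|=r$. Hence $\log|f(re^{i\theta})|\le h_f(\theta)r^q+O(r^{q-p}+\log r)$ uniformly in $\theta$; applying the $1$-Lipschitz map $x\mapsto x^+$ and integrating over $[0,2\pi)$ yields $m(r,f)\le C(\co(W_f^0))\frac{r^q}{2\pi}+O(r^{q-p}+\log r)$.

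The lower bound is the substantial part. Partition $[0,2\pi)$ by the critical angles into finitely many arcs; on each arc precisely one term $H_je^{w_jz^q}$ is dominant, as in Lemma~\ref{one-leading-term}. Quantitatively, because at a cusp the two competing affine pieces of $h_f$ have distinct slopes, the margin $\mu(\theta):=h_f(\theta)-\max_{l\ne j}\Re(w_le^{iq\theta})$ satisfies $\mu(\theta)\gtrsim\operatorname{dist}(\theta,\text{critical angles})$. Let $G_r$ be the unit circle with the $(K/r^p)$-neighbourhoods of the critical angles removed, $K$ a large fixed constant. For $z=re^{i\theta}$ with $\theta\in G_r$ and $l\ne j$,
	$$
	\frac{|H_l(z)e^{w_lz^q}|}{|H_j(z)e^{w_jz^q}|}=\frac{|H_l(z)|}{|H_j(z)|}\,e^{-\mu(\theta)r^q}\le\frac{|H_l(z)|}{|H_j(z)|}\,e^{-cKr^{q-p}};
	$$
since $\log|H_l(z)|=O(r^{q-p})$, choosing $K$ large makes this ratio at most $\frac1{2(m+1)}$ outside a ``bad set'' $B_r\subset G_r$ on which $|H_j(z)|$ is exponentially small, i.e.\ $z$ lies within distance $e^{-\gamma r^{q-p}}$ of a zero of $H_j$; since $H_j$ has only $O(r^{q-p-1})$ zeros in the annulus $\bigl|\,|z|-r\,\bigr|\le1$, the set $B_r$ has exponentially small measure. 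On $G_r\setminus B_r$ one then gets $\log|f(re^{i\theta})|=h_f(\theta)r^q+\log|H_j(re^{i\theta})|+O(1)$, hence (again by $1$-Lipschitzness of $x\mapsto x^+$) $\log^+|f(re^{i\theta})|\ge h_f^+(\theta)r^q-\bigl|\log|H_j(re^{i\theta})|\bigr|-O(1)$. Integrating and using $\log^+|f|\ge0$,
	$$
	m(r,f)\ge\frac{r^q}{2\pi}\int_{G_r\setminus B_r}h_f^+(\theta)\,d\theta-\frac1{2\pi}\sum_j\int_0^{2\pi}\bigl|\log|H_j(re^{i\theta})|\bigr|\,d\theta-O(1).
	$$
The first integral differs from $\int_0^{2\pi}h_f^+=C(\co(W_f^0))$ by at most $\|h_f\|_\infty\cdot\meas\bigl([0,2\pi)\setminus(G_r\setminus B_r)\bigr)=O(r^{-p})$, so the first term is $C(\co(W_f^0))\frac{r^q}{2\pi}-O(r^{q-p})$; and $\sum_j\int_0^{2\pi}\bigl|\log|H_j|\bigr|\,d\theta=\sum_j\bigl(m(r,H_j)+m(r,1/H_j)\bigr)=\sum_j\bigl(2T(r,H_j)+O(\log r)\bigr)=O(r^{q-p}+\log r)$ by the first main theorem and the order bound on the $H_j$. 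Combining with the upper bound proves the theorem.

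The main obstacle, I expect, is the angular bookkeeping near the critical rays in the lower bound: one must establish the single-dominant-term estimate down to angular scale $r^{-p}$ — finer than the scale $r^{-p}\log r$ of the strips $\Lambda_p$ — so as to obtain the clean error $O(r^{q-p})$ rather than $O(r^{q-p}\log r)$, and one must verify that the zeros of the coefficients $H_j$ do not destroy this; both are handled above by taking the excluded neighbourhoods of radius $K/r^p$ with $K$ sufficiently large and by the exponential smallness of $B_r$. The convex-geometry identity, though classical, should also be recorded carefully, since it supplies the exact constant $C(\co(W_f^0))$.
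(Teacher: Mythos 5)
Your overall architecture --- upper bound by the triangle inequality, evaluation of $\int_0^{2\pi}\max\{h_f(\theta),0\}\,d\theta$ via the support function of $\co(W_f^0)$ and Cauchy's perimeter formula, and a lower bound from single-term dominance away from the critical rays --- matches the paper's, and your observation that $\int_0^{2\pi}\bigl|\log|H_j(re^{i\theta})|\bigr|\,d\theta=O(r^{q-p}+\log r)$ by the first main theorem is a clean way to absorb the contribution of $\log|H_j|$ on the good set. The gap is in your treatment of the bad set $B_r$. For dominance you need, pointwise, $|H_j(z)|\ge 2\sum_{l\ne j}|H_l(z)|e^{-\mu(\theta)r^q}$ with $\mu(\theta)\gtrsim K r^{-p}$, i.e.\ $\log|H_j(z)|\ge -(cK-C)r^{q-p}+O(1)$, and you assert that the set of $\theta$ where this fails is exponentially small because such $z$ must lie within $e^{-\gamma r^{q-p}}$ of a zero of $H_j$, of which there are $O(r^{q-p-1})$ in a unit annulus. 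Neither claim is justified: $H_j$ has of the order of $r^{q-p}$ zeros of modulus $\le 2r$, each contributing a term $\log^+\frac{|z_n|}{|z-z_n|}$ to $\log(1/|H_j(z)|)$, so the general minimum-modulus estimate (Lemma~\ref{minmods-lemma}, or Tsuji's) only yields $\log|H_j(z)|\ge -Ar^{q-p}\log r$ outside discs around the zeros --- a bound \emph{weaker} than the threshold $-\gamma r^{q-p}$ you need, however the discs are chosen --- and the unit-annulus count $O(r^{q-p-1})$ holds only on average in $r$, not for every $r$. What one can actually extract from $\int_0^{2\pi}\log^-|H_j(re^{i\theta})|\,d\theta = O(r^{q-p})$ via Chebyshev is only $\meas(B_r)=O(1/K)$, and then the discarded term $\int_{B_r}h_f^+(\theta)r^q\,d\theta$ is $O(r^q/K)$, which destroys the error term and recovers no more than Steinmetz's $o(r^q)$.

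The paper closes exactly this hole by a different mechanism: it deletes Euclidean discs $|z-z_n|\le 1/k(|z_n|)$ around the zeros of the coefficients, so that $\log|H_j(re^{i\theta})|\ge -Ar^{q-p}\log r$ holds for \emph{every} $\theta$ provided $r$ avoids a radial exceptional set $I$ of finite linear measure; it compensates for the extra factor $\log r$ by excluding angular neighbourhoods of the critical rays of the larger width $c\log r/r^p$ (the sets $E_j(r)$ of Lemma~\ref{indicator-lemma}) with $c$ large, so that the margin $e^{-cr^{q-p}\log r}$ beats the possible smallness $e^{-Ar^{q-p}\log r}$ of $|H_j|$; and it then removes the exceptional set $I$ using the monotonicity of $T(r,f)$ through the Borel-type Lemma~\ref{T-lemma}. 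This last step has no counterpart in your write-up and cannot be dispensed with unless you first prove a minimum-modulus estimate for exponential polynomials on whole circles that is genuinely sharper than Lemma~\ref{minmods-lemma}; absent that, you should either retreat to the angular scale $c\log r/r^p$ and a radial exceptional set, or supply such an estimate.
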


Let $f$ be the exponential polynomial in \eqref{exp-poly}, and suppose
that $f$ can be written in the normalized form \eqref{standard-f},
where $\rho(H_j)\leq q-p$ for $1\leq p\leq q$. If one of the polynomials $Q_j(z)$ vanishes identically, we may suppose
that it is $Q_0(z)$, and that no other polynomial $Q_j(z)$ vanishes identically. In this case $H_0(z)$ reduces to an ordinary polynomial
in $z$. Under these conditions, we have the following result that
improves the error term in \eqref{N}.

\begin{theorem}\label{N-theorem}
If $H_0(z)\not\equiv 0$, we have
	\begin{equation}\label{m1}
	m\left(r,\frac{1}{f}\right)=O\left(r^{q-p}+\log r\right).
	\end{equation}
If $Q_0(z)\equiv 0$, then we have the stronger estimate
	\begin{equation}\label{m2}
	m\left(r,\frac{1}{f}\right)=O\left(\log r\right).
	\end{equation}
Moreover, we have
	\begin{equation}\label{N2}
	N\left(r,\frac{1}{f}\right)
	=C(\co(W_f))\frac{r^q}{2\pi}+O\left(r^{q-p}+\log r\right).
	\end{equation}
\end{theorem}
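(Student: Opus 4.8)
The plan is to derive all three estimates from the first Nevanlinna fundamental theorem together with Theorem~\ref{T-theorem} and the standard machinery for proximity functions of exponential polynomials. First I would write
$$
N\left(r,\frac{1}{f}\right)=T(r,f)-m\left(r,\frac{1}{f}\right)+O(1),
$$
so that \eqref{N2} follows immediately from \eqref{T-eqn} and \eqref{m1} once one observes that $C(\co(W_f^0))=C(\co(W_f))$ when $H_0\equiv 0$, and that the discrepancy $C(\co(W_f^0))-C(\co(W_f))$ is absorbed into the error term $O(r^{q-p}+\log r)$ when $H_0\not\equiv 0$ — indeed, adding the origin as a vertex changes the circumference by a bounded amount, and more to the point, when $0\in\co(W_f)$ the two hulls coincide, while if $0\notin\co(W_f)$ then $Q_0\equiv 0$ forces $H_0$ to be a polynomial and a separate argument is needed (this is the case $Q_0\equiv0$ giving the sharper \eqref{m2}). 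So the whole theorem reduces to proving the two bounds \eqref{m1} and \eqref{m2} on $m(r,1/f)$.

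To bound $m(r,1/f)$ I would split the circle $|z|=r$ according to which exponential term dominates. Away from the critical rays — more precisely, outside the union of the sectorial neighborhoods of the $\Lambda_p(\theta^*,c)$ — Lemma~\ref{one-leading-term} (the "one dominant term" property announced in the comments after Theorem~\ref{length}) gives a single index $\ell$ with $|f(re^{i\theta})|\gtrsim |H_\ell(re^{i\theta})|\exp(h_f(\theta)r^q)$, and since $h_f(\theta)>0$ there (the indicator is positive except at the finitely many cusps where it may vanish, and strictly away from a neighborhood of those angles it is bounded below by a positive constant times $r^q$-free quantity), the contribution to $\log^+(1/|f|)$ from these arcs is negative or $O(\log r)$ coming only from the polynomial/zero behaviour of $H_\ell$. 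On the arcs near a critical ray $\theta^*$, the indicator $h_f$ may be small or zero; there I would use the Steinmetz-type representation of $f$ near $\theta^*$ as a sum of the two competing terms $H_je^{w_jz^q}+H_ke^{w_kz^q}$ times $(1+o(1))$, factor out one of them, and reduce $1/|f|$ to the reciprocal of $1+(H_k/H_j)e^{(w_k-w_j)z^q}$; the proximity of this quantity is controlled because on $|z|=r$ the relevant arc has angular width $O(\log r/r^p)$, so its length is $O(r^{q-p}\log r)$ or, after the standard logarithmic-measure estimate for the set where $|1+g|$ is small, $O(r^{q-p}+\log r)$. When $Q_0\equiv0$, the term $H_0$ is a genuine constant-coefficient (polynomial) term sitting at a vertex, and near its critical rays the competing term decays; this extra rigidity kills the $r^{q-p}$ contribution and leaves only $O(\log r)$, giving \eqref{m2}.

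The main obstacle I expect is the near-critical-ray estimate: one must show that the set of $\theta$ in the relevant thin arc on which $|1+(H_k/H_j)e^{(w_k-w_j)z^q}|$ is smaller than, say, $r^{-M}$ has angular measure contributing only $O(r^{q-p}+\log r)$ to the integral of $\log^+$. This is exactly where the counting-function information from Theorem~\ref{length} — that the zeros in $\Lambda_p(\theta^*,c)$ number $|w_j-w_k|r^q/(2\pi)+O(r^{q-p}\log^{3+\veps}r)$ — should be invoked via a Jensen- or Poisson-type argument, or alternatively one uses a direct estimate on the logarithmic measure of sublevel sets of $\Re((w_k-w_j)z^q)$ restricted to the arc, exploiting that the modulus $|e^{(w_k-w_j)z^q}|$ sweeps through a range of size $e^{O(r^{q-p}\log r)}$ as $\theta$ traverses the arc so that it equals $1$ only on a comparatively small sub-arc. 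Making this quantitative, uniformly over all finitely many critical rays and with the correct power of $\log r$, is the technical heart of the proof; everything else is bookkeeping with the first fundamental theorem and the already-established $T(r,f)$ asymptotics.
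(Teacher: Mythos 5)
Your strategy---bound $m(r,1/f)$ directly by an arc decomposition of the circle and then read off \eqref{N2} from the first fundamental theorem---is genuinely different from the paper's, and it has two concrete gaps. First, the convex-hull bookkeeping is backwards, and in one case fatally so: $W_f^0=W_f$ precisely when $H_0\not\equiv 0$ (for then $w_0=0$ is itself a leading coefficient), so the identity $C(\co(W_f^0))=C(\co(W_f))$ is available exactly in the case you already control via \eqref{m1}. In the remaining case $H_0\equiv 0$ with $0\notin\co(W_f)$, the discrepancy $C(\co(W_f^0))-C(\co(W_f))$ is a positive constant multiplying $r^q$, not a bounded quantity, and correspondingly $m(r,1/f)$ is itself of order $r^q$: for $f(z)=e^{z}+e^{2z}$ one has $T(r,f)\sim 2r/\pi$ while $N(r,1/f)\sim r/\pi$, so $m(r,1/f)\sim r/\pi$. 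Hence \eqref{N2} cannot be deduced from \eqref{T-eqn} plus any $o(r^q)$ bound on $m(r,1/f)$ in that case. You acknowledge that ``a separate argument is needed,'' but you misattribute the problematic case to $Q_0\equiv 0$ (which in fact forces $H_0\not\equiv 0$, since $P_0$ then contributes to $H_0$ and the terms $P_je^{Q_j}$ are linearly independent) and you never supply the argument. The paper's device is to pass to $g=e^{-Q_0}f$, which has the same zeros as $f$ and satisfies $m(r,1/g)=O(\log r)$, and to apply Theorem~\ref{T-theorem} to $g$, observing that the hull of the leading coefficients of $g$ is a translate for which $C(\co(U_0))=C(\co(W_f))$.

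Second, \eqref{m1} and \eqref{m2} are left as sketches at exactly the point you call the technical heart, and the mechanism you propose for \eqref{m2} would not yield $O(\log r)$: when $Q_0\equiv 0$ the normalized coefficient $H_0$ collects \emph{all} terms with $\deg(Q_j)<q$ and is in general transcendental of order up to $q-p$, so on the arcs where the zero leading coefficient dominates you still face $\int\log^+(1/|H_0|)\,d\theta$, which is of size $O(r^{q-p})$ and is not ``killed'' by the presence of the polynomial term $P_0$ in any way visible from your decomposition. The paper obtains \eqref{m2} by a self-contained and entirely different argument: the linearly independent functions $g_j=P_je^{Q_j}$, $j\ge 1$, span the solution space of a homogeneous linear differential equation $L_n(g)=0$ whose coefficients are rational (the exponential factors cancel in the Wronskian quotients); since $P_0$ cannot solve this equation, $a_n:=L_n(P_0)\not\equiv 0$ is rational, $L_n(f)=a_n$, and writing $1/f=a_n^{-1}\bigl(f^{(n)}/f+\cdots+a_0\bigr)$ the lemma on the logarithmic derivative for finite-order functions gives $m(r,1/f)=O(\log r)$ with no exceptional set and no arc analysis; \eqref{m1} then follows by comparing $T(r,f)$ with $T(r,e^{-Q_0}f)$ via Theorem~\ref{T-theorem}. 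If you wish to salvage the direct route, you must actually carry out the near-critical-ray estimate (your appeal to Theorem~\ref{length} would not be circular, since its proof rests on Corollary~\ref{n-cor} and not on Theorem~\ref{N-theorem}, but it is only gestured at) and find a genuine source for the $O(\log r)$ in \eqref{m2}.
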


We conclude this section by the following quick consequence of \eqref{N}. This is a weaker form than \eqref{n-asy}, but it is needed in proving Theorem~\ref{length}, whereas \eqref{n-asy} is a consequence of Theorem~\ref{length}.

\begin{corollary}\label{n-cor}
Let $f$ be an exponential polynomial of order $q$ such that
$C=C(\co(W_f))>0$.  Then	
	\begin{equation*}
	n(r,1/f)=qC(\co(W_f))\frac{r^q}{2\pi}+o\left(r^{q}\right).
	\end{equation*}
\end{corollary}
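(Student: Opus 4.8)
The plan is to derive the non-integrated estimate from the integrated one in \eqref{N} by a standard monotonicity/averaging argument, exploiting the fact that $n(t,1/f)$ is nondecreasing. First I would recall the relation
$$
N\left(r,\frac{1}{f}\right)=\int_0^r\frac{n(t,1/f)-n(0,1/f)}{t}\,dt+n(0,1/f)\log r,
$$
so that \eqref{N} gives $\int_0^r \frac{n(t,1/f)}{t}\,dt = C(\co(W_f))\frac{r^q}{2\pi}+o(r^q)$ as $r\to\infty$, since $C>0$ forces the $r^q$ term to dominate the logarithmic contribution. The goal is to upgrade this to $n(r,1/f)=qC(\co(W_f))\frac{r^q}{2\pi}+o(r^q)$.

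The key step is the two-sided squeeze using monotonicity. For any $\alpha>1$, because $n(t,1/f)$ is nondecreasing,
$$
n(r,1/f)\log\alpha=\int_r^{\alpha r}\frac{n(r,1/f)}{t}\,dt\le\int_r^{\alpha r}\frac{n(t,1/f)}{t}\,dt
= N\left(\alpha r,\frac{1}{f}\right)-N\left(r,\frac{1}{f}\right)+n(0,1/f)\log\alpha,
$$
and symmetrically $\int_{r/\alpha}^{r}\frac{n(t,1/f)}{t}\,dt\le n(r,1/f)\log\alpha$. Substituting \eqref{N} into both inequalities, the right-hand side of the upper bound is $\frac{C}{2\pi}\big((\alpha r)^q-r^q\big)+o(r^q)=\frac{C}{2\pi}r^q(\alpha^q-1)+o(r^q)$, and the lower bound similarly yields $n(r,1/f)\log\alpha\ge\frac{C}{2\pi}r^q(1-\alpha^{-q})+o(r^q)$. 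Hence
$$
\frac{1-\alpha^{-q}}{\log\alpha}\cdot\frac{C}{2\pi}\le\liminf_{r\to\infty}\frac{n(r,1/f)}{r^q}\le\limsup_{r\to\infty}\frac{n(r,1/f)}{r^q}\le\frac{\alpha^q-1}{\log\alpha}\cdot\frac{C}{2\pi}.
$$
Letting $\alpha\to 1^+$, both outer quantities tend to $q\cdot\frac{C}{2\pi}$ (since $\frac{\alpha^q-1}{\log\alpha}\to q$ and $\frac{1-\alpha^{-q}}{\log\alpha}\to q$), which forces $n(r,1/f)/r^q\to qC(\co(W_f))/(2\pi)$, i.e. $n(r,1/f)=qC(\co(W_f))\frac{r^q}{2\pi}+o(r^q)$, as claimed. $\Box$

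There is essentially no obstacle here: this is the classical device for passing between $n$ and $N$, and the only point requiring the hypothesis $C>0$ is that it guarantees the $r^q$-term in \eqref{N} genuinely dominates, so that the $o(r^q)$ error from \eqref{N} remains $o(r^q)$ after the subtractions $N(\alpha r)-N(r)$ and does not swamp the main term. If one instead wanted the sharper error term $O(r^{q-p}\log^{3+\veps}r)$ of \eqref{n-asy}, this crude averaging would not suffice — but for the stated $o(r^q)$ conclusion it is exactly the right tool, and it is all that is needed for the role this corollary plays in the proof of Theorem~\ref{length}.
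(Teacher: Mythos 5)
Your argument is correct and is essentially the paper's own proof: both deduce the $n$-asymptotics from \eqref{N} by squeezing $n(r,1/f)\log\alpha$ between the increments $N(\alpha r,1/f)-N(r,1/f)$ and $N(r,1/f)-N(r/\alpha,1/f)$ via the monotonicity of $n$. The only cosmetic differences are that the paper couples the dilation to $r$ by taking $\alpha=1+\delta(r)$ with $\delta(r)=\sqrt{\veps(r)}$ and concludes in one pass, rather than your iterated limit $r\to\infty$ followed by $\alpha\to1^+$, and that your identity for $\int_r^{\alpha r}n(t,1/f)\,t^{-1}\,dt$ carries a superfluous (but harmless, $O(1)$) term $n(0,1/f)\log\alpha$, since the difference $N(\alpha r,1/f)-N(r,1/f)$ already equals that integral.
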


\begin{proof}
Write \eqref{N} in the form
	$$
	C(\co(W_f))\frac{r^q}{2\pi}-\veps(r)r^q\leq N\left(r,\frac{1}{f}\right)\leq C(\co(W_f))\frac{r^q}{2\pi}+\veps(r)r^q,
	$$
where $0<\veps(r)<1$ is a decreasing function such that $\veps(r)\to 0$ as $r\to\infty$. Let $0<\delta(r)<1$ be any decreasing function such that $\delta(r)\to 0$ as $r\to\infty$. Using $\log x\geq \frac{x-1}{x}$ for $x\geq 1$, we get
	\begin{equation*}
	\begin{split}
	n(r)&=\frac{n(r)}{\log \left(1+\delta(r)\right)}
	\int_r^{r\left(1+\delta(r)\right)}\frac{dt}{t}
	\leq \frac{1+\delta(r)}{\delta(r)}
	\int_r^{r\left(1+\delta(r)\right)}\frac{n(t)}{t}\, dt\\
	&=\frac{1+\delta(r)}{\delta(r)}
	\big(N\left(r\left(1+\delta(r)\right),1/f\right)-N(r,1/f)\big)\\
	&=\frac{1+\delta(r)}{\delta(r)}
	\left(\frac{Cr^q}{2\pi}\sum_{j=1}^q{q\choose j}\delta(r)^{j}
	+O\left(\veps(r)r^{q}\right)\right)\\
	&=qC\frac{r^q}{2\pi}+O\left(\delta(r)r^q\right)+O\left(\veps(r)r^q\right)+O\left(\frac{\veps(r)r^q}{\delta(r)}\right).
	\end{split}
	\end{equation*}	
Similarly, using $\log x\leq x-1$ for $x\geq 1$, we get	
	\begin{equation*}
	\begin{split}	
	n(r)&=\frac{n(r)}{-\log \left(1-\delta(r)\right)}
	\int_{r\left(1-\delta(r)\right)}^r\frac{dt}{t}
	\geq \frac{1-\delta(r)}{\delta(r)}\int_{r\left(1-\delta(r)\right)}^r
	\frac{n(t)}{t}\, dt\\
	&=qC\frac{r^q}{2\pi}+O\left(\delta(r)r^q\right)+O\left(\veps(r)r^q\right)+O\left(\frac{\veps(r)r^{q}}{\delta(r)}\right).
	\end{split}
	\end{equation*}
Choose $\delta(r)=\sqrt{\veps(r)}$, and all error terms above are of the form $o\left(r^q\right)$.
\end{proof}


\section{Lemmas for Theorem~\ref{T-theorem}}\label{lemmas-T-sec}


Let $f$ be an exponential polynomial with no zeros. Then $f=e^P$, where $P$ is a polynomial of degree $p$,
and there exist constants $C>0$ and $R>0$ such that
	\begin{equation}\label{eP}
	\log |f(z)|\geq \log e^{-|P(z)|}\geq -Cr^p
	\end{equation}
for $|z|>R$. If $f$ has finitely many zeros, then $f=Qe^P$, where $Q$ is also a polynomial. The conclusion
in \eqref{eP} holds in this case also, but possibly for different constants $C$ and $R$.

For an exponential polynomial $f$ with infinitely many zeros, a lower bound for $\log |f(z)|$
is given in  \cite[Lemma 5.1]{HITW}, which originates from \cite[Theorem~V.~19]{Tsuji}.
This estimate is valid for all $z$ outside of certain discs centered at the zeros of $f$.
In our applications, however, we need to be more flexible with the size of these discs, as is
described next.

\begin{lemma}\label{minmods-lemma}
Let $f$ be an entire function of order $\rho=\rho(f)$, let $\lambda=\lambda(f)$ denote the exponent of convergence of zeros of $f$, and
let $k:(0,\infty)\to [1,\infty)$ be any non-decreasing function.
Suppose that $\lambda\geq 1$ and that the number of zeros
of $f$ in $|z|<r$ satisfies
    $$
    n(r,1/f)=C(r)r^\lambda,
    $$
where $C(r)$ is bounded as $r\to\infty$. Then there exists a constant $A>0$ such that
	\begin{equation}\label{mod-f0}
	\log |f(z)|\geq -A\max\left\{r^{\rho},r^{\lambda}\log \left(rk(2r)\right)\right\}
	\end{equation}
whenever $z=re^{i\theta}$ lies outside of the discs
$|z-z_n|\leq 1/k(|z_n|)$ and $|z|\leq e$.
\end{lemma}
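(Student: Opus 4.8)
The plan is to obtain the lower bound by the classical technique of comparing $\log|f(z)|$ with a Weierstrass/Hadamard-type product, following the scheme of \cite[Theorem~V.19]{Tsuji} but tracking how the radius of the excluded discs enters. Write the Hadamard factorization $f(z)=z^\ell e^{g(z)}\prod_n E_{\lfloor\rho\rfloor}(z/z_n)$, where $g$ is a polynomial of degree $\le\rho$; since $\rho(f)=\rho$, we have $|e^{g(z)}|\ge e^{-O(r^\rho)}$, which already accounts for the $r^\rho$ term in \eqref{mod-f0}. It remains to bound from below $\log\bigl|\prod_n E_{\lfloor\rho\rfloor}(z/z_n)\bigr|$ when $z$ stays outside the discs $|z-z_n|\le 1/k(|z_n|)$.

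First I would split the canonical product into the zeros with $|z_n|\le 2r$ (the ``near'' zeros) and those with $|z_n|>2r$ (the ``far'' zeros). For the far zeros, the standard estimate $\log|E_{\lfloor\rho\rfloor}(z/z_n)|\ge -O\bigl(|z/z_n|^{\lfloor\rho\rfloor+1}\bigr)$ together with the hypothesis $n(t,1/f)=C(t)t^{\lambda}$ (so that $\sum_{|z_n|>2r}|z_n|^{-\lfloor\rho\rfloor-1}$ converges and is $O(r^{\lambda-\lfloor\rho\rfloor-1})$) gives a contribution that is $O(r^{\lambda})$, hence absorbed. For each near zero one has the crude bound $\log|E_{\lfloor\rho\rfloor}(z/z_n)|\ge \log|1-z/z_n|-O(1)$ on the relevant range (the polynomial correction factor contributes $O(1)$ when $|z/z_n|$ is bounded), so the task reduces to bounding $\sum_{|z_n|\le 2r}\log\frac{|z_n|}{|z-z_n|}$ from above, i.e.\ controlling $\sum_{|z_n|\le 2r}\log^+\frac{1}{|z-z_n|}$ plus $\sum_{|z_n|\le 2r}\log|z_n|$, the latter being $O\bigl(n(2r,1/f)\log r\bigr)=O(r^{\lambda}\log r)$.

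The key point, and the place where $k$ enters, is the sum $\sum_{|z_n|\le 2r}\log^+\frac{1}{|z-z_n|}$. Since $z$ avoids every disc $|z-z_n|\le 1/k(|z_n|)$ and $k$ is non-decreasing, for each near zero $|z-z_n|\ge 1/k(|z_n|)\ge 1/k(2r)$, so each term is at most $\log k(2r)$; multiplying by the number of terms $n(2r,1/f)=O(r^{\lambda})$ yields a bound $O\bigl(r^{\lambda}\log k(2r)\bigr)$. Combining with the $O(r^{\lambda}\log r)$ coming from $\sum\log|z_n|$ gives the combined term $O\bigl(r^{\lambda}\log(rk(2r))\bigr)$ in \eqref{mod-f0}. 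The restriction $|z|\le e$ being excluded and $\lambda\ge 1$ are used only to make $\log(rk(2r))\ge 1$ and to keep the elementary estimates clean.

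The main obstacle is making the ``near zeros'' estimate uniform in the function $k$ without losing the sharp exponent $r^{\lambda}$: one must be careful that the bound $|z-z_n|\ge 1/k(2r)$ is applied simultaneously to all $\le O(r^{\lambda})$ near zeros, rather than a weaker worst-case $1/k(|z_n|)$ which could be smaller only if $k$ grew between $|z_n|$ and $2r$ — but monotonicity of $k$ runs the right way. A secondary technical point is the interface $|z_n|\approx 2r$, where one should verify that zeros with $|z_n|$ comparable to $r$ but slightly larger are handled by the ``far'' estimate with the correct power; choosing the cutoff at $2r$ (rather than $r$) gives the needed room. Everything else is a routine repackaging of the Tsuji-type argument already cited.
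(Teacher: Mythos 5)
Your proposal follows essentially the same route as the paper's proof: factor $f$ into a polynomial exponential times a canonical product, split the zeros at $|z_n|=2r$, bound the far zeros by the standard primary-factor estimate, and bound the near-zero sum by $n(2r,1/f)\log\left(2rk(2r)\right)$ using the excluded discs together with the monotonicity of $k$. The only imprecision is the claim that the convergence factors of the near zeros contribute $O(1)$ each --- for $|z_n|\ll r$ a single factor contributes $O\left((r/|z_n|)^{q}\right)$ with $q$ the genus --- but their sum is still $O\left(r^{\lambda}\log r\right)$ by a Riemann--Stieltjes integration, exactly as in the paper, so the argument goes through.
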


\begin{proof}
We may factorize the entire $f$ as $f=Qe^P$, where $P$ is a polynomial and $Q$ is a canonical
product formed with the zeros $z_n$ of $f$.
Set $p=\deg(P)$, in which case $\rho=\max\{p,\lambda\}$, and
$q=\lfloor\lambda\rfloor$.
Since the integral $\int_1^\infty\frac{N(r,1/f)}{r^{q+1}}\, dr$ diverges, the sum
$\sum_{n=1}^\infty |z_n|^{-q}$ also diverges, see \cite[Lemma 1.4]{Hayman}. Hence $q$ is the
genus of the sequence $\{z_n\}$, see \cite[p.~216]{Tsuji} for the definition. This allows us to
write $Q$ in the form
	$$
	Q(z)=\prod_{n=1}^\infty\left(1-\frac{z}{z_n}\right)\exp\left(\frac{z}{z_n}+\cdots+\frac{1}{q}
	\left(\frac{z}{z_n}\right)^q\right).
	$$
We also write
	$$
	 \Psi(z,z_n)=\log\frac{z_n}{z_n-z}-\left(\frac{z}{z_n}+\cdots+\frac{1}{q}\left(\frac{z}{z_n}\right)^q\right),
	$$
so that
	$$
	\log\frac{1}{Q(z)}=\sum_{n=1}^\infty \Psi(z,z_n),\quad z\neq z_n.
	$$
Taking positive real parts on both sides of the above equation results in
	$$
	\log^+\frac{1}{|Q(z)|}\leq \sum_{r_n\leq 2r}\Re^+(\Psi(z,z_n))
	+\sum_{r_n>2r}|\Psi(z,z_n)|=:\Sigma_1+\Sigma_2,
	$$
where $r_n=|z_n|$. Similarly as in the proof of  \cite[Lemma 5.1]{HITW}, we obtain $\Sigma_2=O\left(r^q\right)$. This estimate has nothing to do with the size of the discs around the points $z_n$.
In $\Sigma_1$, we have
	\begin{eqnarray*}
	\Re^+(\Psi(z,z_n))&\leq & \log^+\left|\frac{z_n}{z-z_n}\right|+\sum_{j=1}^q\left(\frac{r}{r_n}\right)^j\\
	&\leq & \log^+\left|\frac{z_n}{z-z_n}\right|+\left(\frac{r}{r_n}\right)^q\sum_{j=0}^{q-1}\left(\frac{r_n}{r}\right)^{j}\\
	&\leq & \log^+\left|\frac{z_n}{z-z_n}\right|+2^q\left(\frac{r}{r_n}\right)^q.
	\end{eqnarray*}	
If $z$ lies outside of the discs $|z-z_n|\leq 1/k(|z_n|)$ and $|z|\leq e$, it follows that
	$$
	\log^+\left|\frac{z_n}{z-z_n}\right|\leq \log\left(r_nk(r_n)\right)\leq\log \left(2rk(2r)\right),
	$$
because $r_n\leq 2r$ and $k$ is non-decreasing. Thus
	\begin{eqnarray*}
	\Sigma_1&\leq& n(2r)\log \left(2rk(2r)\right)+2^qr^q\int_e^{2r}\frac{dn(t)}{t^q}+O(1)\\
	&\leq& O\left(r^\lambda\log \left(rk(2r)\right)\right)+2^qqr^q\int_e^{2r}\frac{n(t)}{t^{q+1}}\, dt
	=O\left(r^\lambda\log \left(rk(2r)\right)\right).
	\end{eqnarray*}
	
The reasoning above shows that there exists a constant $B>0$ such that
$\log |Q(z)|\geq -Br^\lambda\log \left(rk(2r)\right)$
whenever $z=re^{i\theta}$ lies outside of the aforementioned closed discs. Since $P$ is a polynomial of
degree $p$, there is a constant $C>0$ such that $\log \left|e^{P(z)}\right|\geq -Cr^p$ for $|z|>e$.
The assertion now follows by choosing $A=B+C$.
\end{proof}

\begin{corollary}\label{minmod-cor}
Let $f$ be an exponential polynomial of order $\rho=\rho(f)$, and let $\lambda=\lambda(f)\geq 1$ be the exponent of convergence of zeros of $f$. Suppose that $k:(0,\infty)\to [1,\infty)$ is any non-decreasing function.
Then there exists a constant $A>0$ such that
	$$
	\log |f(z)|\geq
	-A\max\left\{r^{\rho},r^\lambda\log \left(rk(2r)\right)\right\}
	$$
whenever $z=re^{i\theta}$ lies outside of the discs
$|z-z_n|\leq 1/k(|z_n|)$ and $|z|\leq e$.
\end{corollary}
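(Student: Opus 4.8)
The plan is to deduce Corollary~\ref{minmod-cor} directly from Lemma~\ref{minmods-lemma} by verifying that every hypothesis of the lemma is met in the present setting, and then observing that the conclusions coincide. The only gaps between the two statements are (i) the lemma requires the growth condition $n(r,1/f)=C(r)r^\lambda$ with $C(r)$ bounded, which is not assumed explicitly in the corollary, and (ii) the lemma requires $\lambda\geq 1$, which is assumed in the corollary. So the work is entirely in verifying (i) for an arbitrary exponential polynomial.

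First I would dispose of the degenerate cases. If $f$ has no zeros or only finitely many zeros, then $\lambda(f)=0<1$, contradicting the hypothesis $\lambda\geq 1$; so in fact $f$ has infinitely many zeros, and moreover $C=C(\co(W_f))>0$ (if the convex hull degenerated to a point, $f$ would be a single term $H_0(z)e^{w_0z^q}$, whose zeros are those of the lower-order exponential polynomial $H_0$, forcing $\lambda\leq q-1$; one would then induct, but the cleanest route is just to note that $C>0$ is exactly the condition under which Corollary~\ref{n-cor} applies and gives $\lambda=q$). Granting $C>0$, Corollary~\ref{n-cor} yields
	$$
	n(r,1/f)=qC(\co(W_f))\frac{r^q}{2\pi}+o(r^q),
	$$
so that $n(r,1/f)\sim \frac{qC}{2\pi}r^q$. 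In particular $\lambda(f)=q$ is an integer, $\lambda\geq 1$, and $n(r,1/f)=C(r)r^\lambda$ with $C(r)\to \frac{qC}{2\pi}$, hence bounded. This is precisely hypothesis~(i) of Lemma~\ref{minmods-lemma}.

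With all hypotheses of Lemma~\ref{minmods-lemma} in force, the lemma produces a constant $A>0$ such that
	$$
	\log|f(z)|\geq -A\max\left\{r^{\rho},\,r^{\lambda}\log\left(rk(2r)\right)\right\}
	$$
for $z=re^{i\theta}$ outside the discs $|z-z_n|\leq 1/k(|z_n|)$ and outside $|z|\leq e$, which is exactly the assertion of the corollary. I would also remark that the cases excluded above ($f$ with finitely many zeros, equivalently $\lambda<1$) are simply outside the scope of the corollary's hypothesis, so nothing further is needed there; and if one wished to state the corollary without assuming $C>0$ a priori, one notes that $\lambda(f)\geq 1$ together with $f$ being an exponential polynomial forces $\lambda(f)=q$ and $C>0$ anyway, by the structure theory behind Theorem~\ref{length} (or by \eqref{N}), so the two formulations are equivalent.

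The main obstacle is conceptual rather than computational: recognizing that the ``extra'' hypothesis of Lemma~\ref{minmods-lemma}, namely the precise order-of-growth estimate $n(r,1/f)=C(r)r^\lambda$ with bounded $C(r)$, is automatically available for exponential polynomials with $\lambda\geq 1$ because of the sharp asymptotics for $N(r,1/f)$ already recorded as \eqref{N}, funneled through Corollary~\ref{n-cor}. Once that link is made, the proof is a two-line invocation of the lemma.
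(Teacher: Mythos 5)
Your overall strategy is the right one, and it is what the paper implicitly intends (the corollary is stated without proof): the only thing separating Corollary~\ref{minmod-cor} from Lemma~\ref{minmods-lemma} is the hypothesis $n(r,1/f)=C(r)r^{\lambda}$ with $C(r)$ bounded, and for an exponential polynomial with $C(\co(W_f))>0$ this follows from Steinmetz's asymptotics \eqref{N} (either through Corollary~\ref{n-cor} as you do, or more directly from $n(r)\log 2\leq N(2r,1/f)=O(r^q)$), which also pins down $\lambda=q$. That part of your argument is correct and not circular, since \eqref{N} is quoted from the literature rather than derived from the results that use this corollary.

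There is, however, a genuine gap in your treatment of the degenerate case, and the ``cleanest route'' you settle on is not an argument. You assert that $\lambda(f)\geq 1$ forces $C(\co(W_f))>0$ and $\lambda=q$; this is false. Take $f(z)=\sin(z)\,e^{z^2}$: in the normalized form \eqref{standard-f} this is $H_0(z)e^{w_0z^2}$ with $H_0(z)=\sin z$ and $m=0$, so $W_f$ is a single point, $C(\co(W_f))=0$, yet $\lambda(f)=1\geq 1$ while $q=2$. Such functions satisfy the hypotheses of the corollary, so they must be covered, and for them Corollary~\ref{n-cor} is not applicable (its hypothesis $C>0$ fails) and $\lambda\neq q$. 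The fix is exactly the induction you mention in passing but then discard: when $m=0$ one has $\log|f(z)|=\log|H_0(z)|+\Re(w_0z^q)$ with $|\Re(w_0z^q)|\leq |w_0|r^q\leq |w_0|r^{\rho}$, so the desired bound for $f$ reduces to the same statement for the lower-order exponential polynomial $H_0$ (which has the same zeros and the same $\lambda$), and one recurses until either a multi-term normalized form is reached (where your main argument applies) or a polynomial is reached (where $\lambda=0<1$, excluded by hypothesis). You need to actually carry out this reduction rather than appeal to the false implication $\lambda\geq 1\Rightarrow C>0$; with that repair the proof is complete.
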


\begin{remark}
We may choose $k(x)=x^{\lambda+\veps}$, $\veps>0$, in Lemma~\ref{minmods-lemma} and $k(x)=x^q\log^2x$ in Corollary~\ref{minmod-cor}. For these
choices of $k$, it follows by Riemann-Stieltjes integration that
	\begin{equation*}
	\sum_{|z_n|\geq e}\frac{1}{k(|z_n|)}=\int_e^\infty\frac{dn(t)}{k(t)}<\infty.
	\end{equation*}
Thus the projection $I$ of the discs $|z-z_n|\leq 1/k(|z_n|)$
on the positive real axis has finite linear measure.
\end{remark}

The following lemma follows implicitly from the discussions in
\cite[Section~3]{HN}. For the convenience of the reader, we give
a direct proof.

\begin{lemma}\label{T-lemma}
Suppose that $T:\R_+\to\R_+$ is eventually a non-decreasing function.
Suppose further that there exist constants $C>0$, $q>0$, $0\leq p<q$
and $s\geq 0$ such that,
as $r\to\infty$, $T$ can be written in the form
	\begin{equation}\label{german-equation}
	T(r)=Cr^q+O\left(r^p\log^s r\right),\quad r\not\in E\cup [0,1],
	\end{equation}
where $E\subset [1,\infty)$ has finite logarithmic measure (or finite linear measure). Then
the asymptotic equality in \eqref{german-equation} holds for all
$r$ large enough.
\end{lemma}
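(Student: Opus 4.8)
The plan is to show that the exceptional set $E$ cannot actually carry any deviation from the asymptotic formula, by exploiting the monotonicity of $T$ together with the fact that $E$ is thin. The key observation is that since $E$ has finite logarithmic (or linear) measure, for every sufficiently large $r$ there are points $r_1 < r < r_2$ arbitrarily close to $r$ with $r_1, r_2 \notin E$; more precisely, one can find $r_1 \in [r(1-\delta(r)), r]$ and $r_2 \in [r, r(1+\delta(r))]$ outside $E$ where $\delta(r)\to 0$ can be chosen to decay as slowly as we like (e.g. a fixed small multiple of $1/\log r$ suffices, since a set of finite logarithmic measure cannot contain an entire interval $[r(1-\delta),r(1+\delta)]$ once $r$ is large). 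Then monotonicity gives $T(r_1)\le T(r)\le T(r_2)$.

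First I would fix the slowly decaying function $\delta(r)$ and, for each large $r$, select such sandwiching points $r_1,r_2\notin E$. Applying \eqref{german-equation} at $r_1$ and $r_2$ yields
\[
Cr_1^q+O\!\left(r_1^p\log^s r_1\right)\ \le\ T(r)\ \le\ Cr_2^q+O\!\left(r_2^p\log^s r_2\right).
\]
Next I would expand $r_i^q = r^q(1\pm\delta(r))^q = r^q + O(\delta(r) r^q)$ and note $r_i^p\log^s r_i = O(r^p\log^s r)$, so both the upper and lower bounds become $Cr^q + O(\delta(r)r^q) + O(r^p\log^s r)$. This already shows $T(r) = Cr^q + o(r^q)$, but to recover the sharp error term $O(r^p\log^s r)$ we cannot afford the $O(\delta(r)r^q)$ loss with a generic $\delta$.

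The resolution — and the main obstacle — is a bootstrap. Having established $T(r)=Cr^q+o(r^q)$ everywhere, one feeds this back: the crude global bound lets us estimate $T(r_2)-T(r)$ and $T(r)-T(r_1)$ more carefully. Write $T(r_2)\le T(r)$'s reverse together with the now-global formula at $r_2$ and at $r$ evaluated only through monotonicity against nearby good points; iterating, the permissible $\delta(r)$ can be taken comparable to $(r^p\log^s r)/r^q = r^{p-q}\log^s r$, which tends to $0$, and a set of finite logarithmic measure still cannot fill an interval of this width around every large $r$. With this choice $O(\delta(r)r^q) = O(r^p\log^s r)$, and the two sandwich inequalities collapse to $T(r)=Cr^q+O(r^p\log^s r)$ for all large $r$. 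The delicate point is verifying that the thinness of $E$ genuinely permits sandwiching points within a window shrinking like $r^{p-q}\log^s r$; this follows because if $[r(1-\delta(r)), r(1+\delta(r))]\subset E$ for arbitrarily large $r$, summing the logarithmic lengths $\log\frac{1+\delta(r)}{1-\delta(r)}\asymp \delta(r)$ of a suitably chosen sequence of such disjoint-enough intervals would force infinite logarithmic measure, a contradiction. (The linear-measure case is identical with $\delta(r)r$ in place of $\delta(r)$.) Once the window is shown to be nonempty outside $E$, the rest is the routine expansion above.
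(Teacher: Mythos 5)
Your sandwiching step is sound in substance and is essentially the paper's argument in disguise: the paper feeds the two monotone bounds $(C\pm\veps)r^q\pm C_1r^p\log^s r$ into Gundersen's comparison lemma with dilation $\alpha=1+\veps$, which is precisely the device of comparing $T(r)$ with its values at nearby non-exceptional radii. One correction already at this stage: you cannot prescribe the decay rate of $\delta(r)$ in advance --- a set of finite logarithmic measure \emph{can} contain a full window of relative width $c/\log r$ around each point of a sparse sequence $r_n\to\infty$, since $\sum_n 1/\log r_n$ may converge. What is true is that the nearest good points $r_1\leq r\leq r_2$ satisfy $r_2/r_1=1+o(1)$ with no controlled rate, because $(r_1,r_2)\subset E$ and the tail logarithmic measure of $E$ tends to $0$. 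Either way, this first pass correctly yields $T(r)=Cr^q+o(r^q)$ for all large $r$, which is what the paper's proof also delivers.

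The genuine gap is the bootstrap. Your justification --- that $E$ cannot contain the window of relative width $\delta(r)\asymp r^{p-q}\log^s r$ around every large $r$ because the logarithmic lengths would sum to infinity --- fails for the same reason as above: the bad radii need not be dense, so $\sum_n\delta(r_n)$ can converge. Concretely, let $r_n=e^n$ and $E=\bigcup_n[r_n,\,r_n(1+n^{-2})]$; then $\sum_n\log(1+n^{-2})<\infty$, yet for large $n$ each such interval contains the entire window of relative width $r^{p-q}\log^s r$ about its midpoint. Worse, setting $T(r)=Cr^q$ off $E$ and $T\equiv Cr_n^q$ on $[r_n,r_n(1+n^{-2}))$ gives a non-decreasing $T$ satisfying the hypothesis for which $Cr^q-T(r)\asymp r^q/\log^2 r$ at the midpoints; this is not $O\left(r^p\log^s r\right)$, so no bootstrap (which can only exploit monotonicity together with the values of $T$ at the endpoints of the gaps of $E$) can recover the sharp error term. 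You should be aware that the paper's own proof shares this limitation: its final display is $(C\pm\veps')r^q\pm C_2r^p\log^s r$ for $r>R_2(\veps')$, which is the $o(r^q)$ conclusion rather than the stated $O\left(r^p\log^s r\right)$ one. A provable version either weakens the conclusion to $T(r)=Cr^q+o(r^q)$ or adds a hypothesis controlling the lengths of the components of $E$.
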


\begin{proof}
Let $\veps\in (0,\min\{1,C\})$. Then there exist constants $R_1=R_1(\veps)>1$
and $C_1>0$ such that $T(r)$ is increasing for $r>R_1$ and
	$$
	(C-\veps)r^q-C_1r^p\log^s r\leq T(r)
	\leq (C+\veps)r^q+C_1r^p\log^s r,
	$$
where $r\in (R_1,\infty)\setminus E$. Moreover, due to $p<q$ we may
suppose that $R_1$ is chosen large enough so that the lower
bound and the upper bound for $T(r)$ are increasing functions of $r$.
Then, using \cite[Lemma~5]{Gundersen2}, we can find an
$R_2=R_2(\veps)>\max\{2,R_1\}$ such that, for $r>R_2$,
	\begin{eqnarray*}
	T(r) &\leq& (C+\veps)(1+\veps)^qr^q+C_1(2r)^p\log^s (2r)\\
	&\leq& (C+\veps)(1+\veps)^qr^q+2^{p+s}C_1r^p\log^s r,
	\end{eqnarray*}
and similarly
	$$
	T(r) \geq \frac{C-\veps}{(1+\veps)^q}r^q-C_1r^p\log^s r.
	$$
In other words,
	$$
	(C-\veps')r^q-C_2r^p\log^s r\leq T(r)
	\leq (C+\veps')r^q+C_2r^p\log^s r,\quad r>R_2,
	$$
where $R_2=R_2(\veps')>2$ and $C_2=2^{p+s}C_1$.
This yields the assertion.
\end{proof}

\begin{lemma}[\cite{HITW}]\label{indicator-lemma}
Let $f$ be an exponential polynomial of the form \eqref{standard-f},  $\psi_k(\theta)=\Re\left(w_ke^{iq\theta}\right)$, $p\leq q$ and
	\begin{equation}\label{2a}
	2a=\min\left\{|\psi_k'(\theta)-\psi_j'(\theta)| : \psi_k(\theta)=\psi_j(\theta),\ k\neq j\right\}.
	\end{equation}
For a given $c>0$ there exists an $R_0>0$, with the following property: If $z=re^{i\theta}\not\in \Lambda_p(\theta^*,c)$
for any $\theta^*$ and any $r\geq R_0$, and if $h_f(\theta)=\psi_j(\theta)$, then
	$$
	h_f(\theta)>\max_{k\neq j}\psi_k(\theta)+ac\frac{\log r}{r^p}.
	$$
\end{lemma}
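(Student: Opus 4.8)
The plan is to reduce everything to a local analysis of the functions $\psi_k(\theta)=\Re(w_ke^{iq\theta})$, which are real-analytic trigonometric polynomials, and then exploit the definition of the modified logarithmic strips $\Lambda_p(\theta^*,c)$. Fix $z=re^{i\theta}$ with $r$ large and suppose $h_f(\theta)=\psi_j(\theta)$, so that $\psi_j(\theta)\geq\psi_k(\theta)$ for every $k$. I would distinguish whether $\theta$ is close to a critical angle or not. If $\theta$ is bounded away (by a fixed constant depending only on $f$) from every critical angle $\theta^*$, then for each $k\neq j$ the quantity $\psi_j(\theta)-\psi_k(\theta)$ is bounded below by a positive constant on that compact complement, and since $\frac{\log r}{r^p}\to 0$ the claimed inequality holds trivially for $r\geq R_0$ with $R_0$ large. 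So the substance is the case where $\theta$ lies near some critical angle $\theta^*$.

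Near such a $\theta^*$, only finitely many indices $k$ can satisfy $\psi_k(\theta^*)=\psi_j(\theta^*)=h_f(\theta^*)$; call them the \emph{active} indices. For the inactive ones, $\psi_j(\theta)-\psi_k(\theta)$ stays bounded below by a positive constant in a fixed neighborhood of $\theta^*$, so they are harmless exactly as above. For an active index $k$, consider $g(\theta)=\psi_j(\theta)-\psi_k(\theta)$. We have $g(\theta^*)=0$ and, by the definition \eqref{2a} of $2a$, $|g'(\theta^*)|=|\psi_j'(\theta^*)-\psi_k'(\theta^*)|\geq 2a$. Because $g$ is real-analytic with bounded second derivative on a fixed neighborhood, a Taylor expansion gives $g(\theta)=g'(\theta^*)(\theta-\theta^*)+O((\theta-\theta^*)^2)$; shrinking the neighborhood to a fixed size (depending only on $f$, via a bound on $\|\psi_j''\|_\infty+\|\psi_k''\|_\infty$) we get $|g(\theta)|\geq a|\theta-\theta^*|$ there. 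Now $z=re^{i\theta}\notin\Lambda_p(\theta^*,c)$ means precisely $|\theta-\theta^*|\geq c\frac{\log r}{r^p}$, hence $|g(\theta)|\geq ac\frac{\log r}{r^p}$. Since $h_f(\theta)=\psi_j(\theta)\geq\psi_k(\theta)$ forces $g(\theta)\geq 0$, we actually get $g(\theta)\geq ac\frac{\log r}{r^p}$, i.e.\ $\psi_j(\theta)\geq\psi_k(\theta)+ac\frac{\log r}{r^p}$, which is the desired bound for this active $k$.

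Finally I would combine the two regimes: choose the neighborhoods of the critical angles and the threshold distance so that $R_0$ can be taken large enough to handle simultaneously (i) the inactive indices near each $\theta^*$, (ii) all indices when $\theta$ is far from every $\theta^*$, and (iii) the requirement that $c\frac{\log r}{r^p}$ be smaller than the fixed radius of the chosen neighborhoods (so that $\theta\notin\Lambda_p(\theta^*,c)$ with $\theta$ near $\theta^*$ still places $\theta$ inside the region where the Taylor estimate is valid). The only mild subtlety — and the step I would be most careful with — is the uniformity of the constant $a$: one must check that the single constant from \eqref{2a}, which is a minimum over all crossing pairs $(k,j)$ over all $\theta$, indeed serves as the lower bound for $|g'(\theta^*)|$ at every critical angle simultaneously, and that the quadratic remainder can be absorbed by passing from $2a$ to $a$ after shrinking to a neighborhood whose size depends only on the (finitely many) coefficients $w_k$ and on $q$. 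Everything else is a routine packaging of these local estimates.
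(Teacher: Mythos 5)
Your argument is correct. The paper gives no proof of this lemma (it is quoted directly from \cite{HITW}), and your route --- compactness away from the finitely many critical angles, plus a first-order Taylor expansion of $\psi_j-\psi_k$ at each critical angle with the quadratic remainder absorbed by passing from $2a$ to $a$ --- is precisely the standard argument behind the cited statement; the uniformity of $a$ that you single out as the delicate point is in fact automatic, since at any zero of $\psi_k-\psi_j=\Re\left((w_k-w_j)e^{iq\theta}\right)$ one has $|\psi_k'(\theta)-\psi_j'(\theta)|=q\,|w_k-w_j|$, so the minimum in \eqref{2a} equals $q\min_{k\neq j}|w_k-w_j|>0$ and serves as a single lower bound at every crossing simultaneously.
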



\section{Proof of Theorem~\ref{T-theorem}}\label{proof-T-sec}


We divide the proof into four steps for clarity.

\medskip
\noindent
\textbf{5.1.~Preliminaries.}
Let $\arg(z)=\theta_j$, $j=1,\ldots,k$, be the critical rays of $f$ organized such that $0\leq \theta_1<\theta_2<\cdots<\theta_k<2\pi$. Set $\theta_{k+1}=\theta_1+2\pi$.
For $j=1,\ldots,k$, define $F_j=(\theta_{j},\theta_{j+1})$ and
	\begin{eqnarray*}
	\Lambda_p(\theta_j,c)&=&\left\{z=re^{i\theta} : r>1,\ |\arg(z)-\theta_j|
	<c\frac{\log r}	{r^p} \right\},\\
	F_j(r)&=&\left(\theta_j+c\frac{\log r}{r^p},\theta_{j+1}-c\frac{\log r}{r^p}\right),
	\end{eqnarray*}
where $c>0$ is sufficiently large. The domain $\Lambda_p(\theta_j,c)$ curves
asymptotically towards the critical ray $\arg(z)=\theta_j$, provided that $p\geq 2$.
The interval $F_j(r)$ corresponds to the arguments in between two consequtive
domains $\Lambda_p(\theta_j,c)$ and $\Lambda_p(\theta_{j+1},c)$.
Clearly $F_j(r)\to (\theta_j,\theta_{j+1})=F_j$ as $r\to\infty$ for any $p\geq 1$.

Since $f$ has no poles, we have
	$$
	T(r,f)=m(r,f)=\frac{1}{2\pi}\int_0^{2\pi}\log^+|f(re^{i\theta})|\, d\theta.
	$$
For $j=0,\ldots,m$, we define	
	\begin{eqnarray*}
	E_j &=& \left\{\theta\in[0,2\pi) :
	\Re\left(w_je^{iq\theta}\right)>\max_{k\neq j}\Re\left(w_ke^{iq\theta}\right)\right\},\\
	E_j(r) &=& \left\{\theta\in[0,2\pi) :
	\Re\left(w_je^{iq\theta}\right)> \max_{k\neq j}\Re\left(w_ke^{iq\theta}\right)
	+c\frac{\log r}{r^p}\right\}.
	\end{eqnarray*}
By Lemma~\ref{indicator-lemma}, each set $E_j(r)$ is a finite union
of intervals $F_i(r)$. Thus set $E_j$ is a finite union of intervals $F_i$.
If $E_0\neq\emptyset$, then $P_0(z)\not\equiv 0$ and $w_0=0$, so the definition of the set $E_j$ implies that the terms $|H_j(z)e^{w_jz^q}|$, $j=1,\ldots,m$, are bounded for
$\arg(z)=\theta\in E_0$. On the other hand, if $\theta\not\in E_j$ for some $j$, then $|H_j(z)e^{w_jz^q}|\leq M(r,H_j)$. Keeping in mind that $\rho(H_j)\leq q-p$, we conclude in both cases $E_0=\emptyset$ and $E_0\neq\emptyset$ that
	\begin{equation}\label{T-sum}
	T(r,f)=\sum_{j=1}^m\frac{1}{2\pi}\int_{E_j}\log^+|f(re^{i\theta})|\, d\theta
    +O\left(r^{q-p}+\log r\right).
	\end{equation}

\medskip
\noindent
\textbf{5.2.~Upper bound for $T(r,f)$.}
To estimate the integrals in \eqref{T-sum} over the sets $E_j$, we write
	\begin{equation}\label{one-term-out}
	f(z)=e^{w_jz^q}\left(H_j(z)+\sum_{k\neq j} H_k(z)e^{(w_k-w_j)z^q}\right).
	\end{equation}
If $z=re^{i\theta}$ and $\theta\in E_j$, it follows that
	\begin{equation}\label{f-up}
	\begin{split}
    \log^+|f(z)| &\leq \Re\left(w_je^{iq\theta}\right)+\sum_{j=0}^m \log^+M(r,H_j)+\log (m+1)\\
	&= k_f(q\theta)r^q +O\left(r^{q-p}+\log r\right),
    \end{split}
	\end{equation}
where $k_f(\theta)$ is the support function for the convex set $\co(W_f^0)$
\cite[p.~74]{Levin1}. Therefore
	\begin{eqnarray*}
    T(r,f)
    &\leq&\sum_{j=1}^m \frac{r^q}{2\pi}\int_{E_j}k_f(q\theta)\, d\theta
    +O\left(r^{q-p}+\log r\right)\\
    &\leq& \frac{r^q}{2\pi}\int_0^{2\pi}k_f(q\theta)\, d\theta
    +O\left(r^{q-p}+\log r\right)\\
    &=& \frac{r^q}{2\pi}\int_0^{2\pi}k_f(\varphi)\, d\varphi
    +O\left(r^{q-p}+\log r\right)\\
    &=&C(\co(W_f^0))\frac{r^q}{2\pi}+O\left(r^{q-p}+\log r\right),
    \end{eqnarray*}
where the last identity follows by Cauchy's formula for convex
curves. This formula can be found in many books on integral geometry.

\medskip
\noindent
\textbf{5.3.~Lower bound for $T(r,f)$.}
If $q=1$ or if $p=q$, then the coefficients $H_j(z)$ are polynomials, and no exceptional
set other than $|z|$ is large enough occurs when estimating $T(r,f)$ downwards.
Hence we suppose that $1\leq p\leq q-1$. Observe that
	$$
	\log^+(xy)\geq \max\left\{\log^+x-\log^+\frac{1}{y},0\right\}
	$$
for all $x\geq 0$ and $y>0$. So, if $z=re^{i\theta}$ and $\theta\in E_j(r)$, by applying
this observation to \eqref{one-term-out}, we have
	\begin{eqnarray*}
	\log^+|f(z)| &\geq& \max\left\{\Re\left(w_je^{iq\theta}\right)
	-\log^+\left||H_j(z)|-\sum_{k\neq j} M(r,H_k)
	\left|e^{(w_k-w_j)z^q}\right|\right|^{-1},0\right\}\\
	&\geq& \max\left\{h_f(\theta)r^q -\log^+\left||H_j(z)|-\sum_{k\neq j}
	M(r,H_k)e^{-cr^{q-p}\log r}\right|^{-1},0\right\}.
	\end{eqnarray*}
The functions $H_j(z)$ are of order $\leq q-p\leq q-1$ and of finite type. Therefore there
exists a constant $A>0$ such that
    $$
    \log M(r,H_j)\leq Ar^{q-p}\log r\quad\textnormal{and}\quad
    \log |H_j(z)|\geq -Ar^{q-p}\log r.
    $$
The latter inequality is valid for all $|z|=r$ outside of a set $I\subset (0,\infty)$
of finite linear measure. This follows by applying Lemma~\ref{minmods-lemma} with
$k(x)=x^q$ to each of the coefficients $H_j(z)$, and then taking the union
of all the exceptional sets involved. See also the remark following
Lemma~\ref{minmods-lemma}. We note that $k(x)$ could be chosen to have less
growth at this point, but the particular growth rate fixed here is needed later on.

Choosing $c>2a$ in Lemma \ref{indicator-lemma}, we have
    \begin{equation}\label{f-down}
    \log^+ |f(z)|\geq \max\{h_f(\theta)r^q,0\}-O\left(r^{q-p}+\log r\right),
    \end{equation}
where $|z|=r\not\in I$. Recall that each set $E_j$ is a finite union of intervals $F_i$.
In fact, $E(r):=\cup_jE_j(r)=\cup_iF_i(r)$, so that
	\begin{eqnarray*}
	\int_{[0,2\pi]\setminus E(r)}\log^+|f(re^{i\theta})|\, d\theta
	&=&\sum_{j=1}^n\int_{\theta_j+c\frac{\log r}{r^p}}^{\theta_{j+1}-c\frac{\log r}{r^p}}
	\log^+|f(re^{i\theta})|\, d\theta\\
    &=&O\left(r^{q-p}+\log r\right).
	\end{eqnarray*}
From \eqref{T-sum}, \eqref{f-up} and \eqref{f-down}, it then follows that
    \begin{eqnarray*}
    T(r,f) &\geq& \sum_{j=1}^m \frac{r^q}{2\pi}\int_{E_j(r)}\max\{h_f(\theta),0\}\, d\theta
    +O\left(r^{q-p}+\log r\right)\\
    &=&\sum_{j=1}^m \frac{r^q}{2\pi}\int_{E_j}\max\{h_f(\theta),0\}\, d\theta
    +O\left(r^{q-p}+\log r\right)\\
    &=& \frac{r^q}{2\pi}\int_0^{2\pi}\max\{h_f(\theta),0\}\, d\theta
    +O\left(r^{q-p}+\log r\right)\\
    &=&C(\co (W_f^0))\frac{r^q}{2\pi}+O\left(r^{q-p}+\log r\right),\quad r\not\in I,
    \end{eqnarray*}
where the last identity follows again by Cauchy's formula and by the
fact that $h_f(\theta)=k_f(q\theta)$.

\medskip
\noindent
\textbf{5.4.~Conclusion of the proof.}
The upper and lower estimates just obtained for $T(r,f)$ have the same magnitude
of growth, but the lower estimate is valid outside of an exceptional set $I$ of finite
linear measure. The set $I$ can be avoided by means of Lemma~\ref{T-lemma}.
This completes the proof of \eqref{T-eqn}.


\section{Proof of Therem~\ref{N-theorem}}\label{proof-N-sec}


Let us begin with the particular case $Q_0(z)\equiv 0$. As  the
functions $g_j(z)=P_j(z)e^{Q_j(z)}$, $j=1,\ldots,n$, are linearly independent, they
form a fundamental solution base for a linear differential equation
	\begin{equation}\label{lde}
	L_{n}(g):=g^{(n)}+a_{n-1}(z)g^{(n-1)}+\cdots+a_0(z)g=0,
	\end{equation}
see \cite[Proposition~1.4.6]{Laine}. The coefficient functions $a_j(z)$ can be expressed
as quotients of Wronskian and modified Wronskian determinants of the functions $g_j(z)$.
Note that $g_j'(z)=R_j(z)e^{Q_j(z)}$, where
	$$
	R_j(z)=P_j'(z)+Q_j'(z)P_j(z)\not\equiv 0.
	$$
This means that the exponential term $e^{Q_j(z)}$ remains in differentiation, and
further differentiations will not change that. Therefore, from each column of each
determinant we can pull out the single exponential term as
a common factor. Thus $\exp\big(Q_1(z)+\cdots+Q_n(z)\big)$ is a common factor for each
determinant, and they cancel out in the quotients. The entries of the remaining determinants
are polynomials. Hence the coefficients $a_j(z)$ in \eqref{lde} are rational functions.

If $P_0(z)$ would be a solution of \eqref{lde}, it would have to be  linearly dependent
with the solutions $g_1,\ldots,g_n$. Thus $P_0(z)$ would be transcendental, which is
a contradiction. It follows that $a_n(z):=L_{n}(P_0)\not\equiv 0$ is a rational function. We conclude that $f$
is a solution of the non-homogeneous equation
	$$
	f^{(n)}+a_{n-1}(z)f^{(n-1)}+\cdots+a_0(z)f=a_n(z).
	$$
Dividing both sides by $fa_n(z)$ and using the lemma on the logarithmic derivative
together with the fact that the coefficients are rational gives us \eqref{m2}.

\medskip
Suppose next that $Q_0(z)\not\equiv 0$.	Define
	\begin{equation}\label{g}
	g(z)=e^{-Q_0(z)}f(z)=P_0(z)+\cdots+P_n(z)e^{Q_n(z)-Q_0(z)}.
	\end{equation}
Now the discussion above can be applied to $g$ giving us $m(r,1/g)=O(\log r)$
and, a fortiori,
	\begin{equation}\label{N1}
	N\left(r,\frac{1}{f}\right)=N\left(r,\frac{1}{g}\right)=T(r,g)+O(\log r).
	\end{equation}
Suppose in addition that $H_0(z)\not\equiv 0$. Due to the assumption $\rho(H_j)\leq q-p$, we may assume, without loss of generality, that $\deg(Q_0)\leq q-p$. Thus, using
Theorem~\ref{T-theorem} to $f$ and using \eqref{g}, we get
	\begin{equation}\label{T1}
	\begin{split}
	T(r,f)&=C(\co(W_f^0))\frac{r^q}{2\pi}+O\left(r^{q-p}+\log r\right)\\
	&=T(r,g)+O\left(r^{q-p}+\log r\right).
	\end{split}
	\end{equation}
We deduce the estimate \eqref{m1} by combining \eqref{N1} and \eqref{T1}.
Finally, we suppose that $H_0(z)\equiv 0$, in which case $\deg(Q_j)=q$ for every $j$ and $m=n$. Let $U=\{\overline{w}_1-\overline{w}_0,\ldots,\overline{w}_m-\overline{w}_0\}$ be the set of conjugate leading coefficients of $g$, and set $U_0=U\cup\{0\}$. By a simple vector
calculus, we conclude the following: If $\overline{w}_0$ is a boundary point of $\co(W_f)$,
then $0$ is a boundary point of $\co(U)$, while if $\overline{w}_0$ is an interior point of
$\co(W_f)$, then $0$ is an interior point of $\co(U)$. Thus, applying Theorem~\ref{T-theorem}
to $g$, we get
	\begin{eqnarray*}
	T(r,g) &=& C(\co(U_0))\frac{r^q}{2\pi}+O\left(r^{q-p}+\log r\right)\\
	&=& C(\co(W_f))\frac{r^q}{2\pi}+O\left(r^{q-p}+\log r\right).
	\end{eqnarray*}
The final assertion \eqref{N2} follows from this and \eqref{N1}.


\section{Lemmas for Theorem~\ref{length}}\label{lemmas-Polya}


The following lemma is considered for more general curves in \cite{DHW} but in Cartesian coordinates. However, our situation is very delicate, and we need the representation particularly in polar coordinates.

\begin{lemma}\label{meet-lemma}
Let $p\geq 1$ be any real number, and let $U$ be any collection of Euclidean discs $D_n=D(z_n,r_n)$, where the center points $z_n\in\C$ are ordered according to increasing moduli, $|z_n|\to\infty$, $r_n>0$,
$r_n\to 0$ and
	\begin{equation}\label{convergence}
	\sum_{|z_n|>e} \frac{|z_n|^{p-1}r_n}{\log |z_n|}<\infty.
	\end{equation}
Then the set $C\subset (0,\infty)$ of values $c$ for which the curve
    $$
	\partial\Lambda_p(0,c)=\left\{z=re^{i\theta} : r\geq 1,\ |\arg(z)|=c\frac{\log r}{r^p} \right\}
	$$
meets infinitely many discs $D_n$ has linear measure zero.
\end{lemma}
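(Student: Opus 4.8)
The plan is to run a Borel--Cantelli argument in the parameter $c$. For each disc $D_n=D(z_n,r_n)$ put $C_n=\{\,c>0:\partial\Lambda_p(0,c)\cap D_n\neq\emptyset\,\}$; then a value $c>0$ belongs to $C$ exactly when $\partial\Lambda_p(0,c)$ meets $D_n$ for infinitely many $n$, i.e.\ $C=\limsup_n C_n=\bigcap_{N\ge1}\bigcup_{n\ge N}C_n$. Discarding finitely many of the discs does not change this $\limsup$, so, since $|z_n|\to\infty$ and $r_n\to0$, we may and do assume that every $D_n$ is contained in $\{|z|>e^2\}$ and satisfies $r_n<|z_n|/2$. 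Under these standing assumptions I will show that $\sum_n|C_n|<\infty$ (linear measure); by the Borel--Cantelli lemma this gives $|C|=|\limsup_n C_n|\le\inf_N\sum_{n\ge N}|C_n|=0$, as required.

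The key observation is that for $z$ with $|z|=r>1$ the defining relation $|\arg z|=c\,\tfrac{\log r}{r^p}$ of $\partial\Lambda_p(0,c)$ pins down $c$ uniquely: $z\in\partial\Lambda_p(0,c)$ if and only if $c=\psi(z):=|\arg z|\,\dfrac{r^p}{\log r}$, where $|\arg z|=\arccos(\Re z/|z|)\in[0,\pi]$ is the (branch-free) angular distance of $z$ to the positive real axis. Thus $\psi$ is continuous on $\{|z|>1\}$ and $C_n=\psi(D_n)$; being the continuous image of the connected set $D_n$, the set $C_n$ is an interval, so $|C_n|\le\operatorname{osc}_{D_n}\psi$.

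It remains to bound $\operatorname{osc}_{D_n}\psi$. Write $\psi=u\cdot v$ with $u(z)=|\arg z|$ and $v(z)=g(|z|)$, $g(r)=r^p/\log r$, and use $\operatorname{osc}(uv)\le\sup|u|\cdot\operatorname{osc}(v)+\sup|v|\cdot\operatorname{osc}(u)$. Over $D_n$ I estimate the four quantities on the right: $\sup u\le\pi$; $\operatorname{osc}_{D_n}u\le\pi r_n/|z_n|$, because $z\mapsto z/|z|$ maps $D_n$ onto a circular arc of angular length $2\arcsin(r_n/|z_n|)\le\pi r_n/|z_n|$ and $|\arg z|$ is $1$-Lipschitz along the unit circle; $\sup_{D_n}v\le g(2|z_n|)\le 2^p|z_n|^p/\log|z_n|$, since $g$ is increasing on $(e,\infty)$; and $\operatorname{osc}_{D_n}v\le 2r_n\sup_{|z_n|/2<r<2|z_n|}g'(r)\le C_p'\,r_n|z_n|^{p-1}/\log|z_n|$, using $0<g'(r)\le p\,r^{p-1}/\log r$ there. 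Combining these bounds gives $|C_n|\le C_p\,\dfrac{|z_n|^{p-1}r_n}{\log|z_n|}$ for a constant $C_p>0$ depending only on $p$, and summing over $n$ and invoking \eqref{convergence} yields $\sum_n|C_n|<\infty$.

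The step I expect to be the main obstacle is the oscillation estimate just described: one must handle the non-smoothness of $|\arg z|$ along the positive real axis (hence the recourse to oscillations and Lipschitz bounds rather than to derivatives), and the fact that $g$ is not monotone near $r=e^{1/p}$, and, crucially, one must keep the exponent of $|z_n|$ exactly right, so that the bound for $|C_n|$ reproduces the summand $|z_n|^{p-1}r_n/\log|z_n|$ appearing in \eqref{convergence} without losing a power. Working in polar coordinates, rather than with the Cartesian estimates of \cite{DHW}, is exactly what keeps this bookkeeping clean.
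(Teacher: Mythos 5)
Your proof is correct and follows essentially the same route as the paper: both arguments show that the set of $c$-values for which $\partial\Lambda_p(0,c)$ meets a given disc $D_n$ is an interval of length $O\bigl(|z_n|^{p-1}r_n/\log|z_n|\bigr)$ (the paper by comparing the extreme intersection parameters $c_1,c_2$ via $c=\arg(\zeta)|\zeta|^p/\log|\zeta|$, you by an oscillation bound on $\psi(z)=|\arg z|\,|z|^p/\log|z|$), and then conclude by summing the tails, which is exactly your Borel--Cantelli step. The only differences are organizational; if anything, your formulation $C_n=\psi(D_n)$ handles discs far from the positive real axis slightly more uniformly than the paper's reduction to the sector $|y|\leq x$.
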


\begin{proof}
If $z\in \partial\Lambda(0,c)$, then
	$$
	z=r\cos\left(c\frac{\log r}{r^p}\right)\pm ir\sin\left(\frac{\log r}{r^p}\right)\sim r\pm ic\frac{\log r}{r^p}.
	$$
Asymptotically this corresponds to the Cartesian curves $y=\pm c\frac{\log x}{x^{p-1}}$. From this representation we see that the cases $p=1$ and $p>1$ have a different geometry. Nevertheless our proof works for both cases simultaneously.

By the Cartesian representation we may suppose that the points $z_n$ lie in the right half-plane in between the lines $y=\pm x$. Since $|z_n|\to\infty$ and $r_n\to 0$, there exists a positive integer $N$ such that
	$$
	|z_n|-r_n\geq |z_n|/2\geq e,\quad n\geq N.
	$$
Note that the function $x\mapsto \frac{\log x}{x^p}$ is strictly decreasing for $x\geq e$. Moreover, it suffices to consider the portion of $\partial\Lambda_p(0,c)$ located in the upper half-plane, call it $\Lambda^+(c)$ for short.

Suppose that a disc $D_n$ lies asymptotically in between the curves $\Lambda^+(c_1)$ and $\Lambda^+(c_2)$, where $c_1<c_2$, and let $\zeta_1$ and $\zeta_2$ denote the respective intersection points. The disc $D_n$
can be seen from the origin at an angle $2\theta_n$, where $\sin\theta_n=r_n/|z_n|$.

 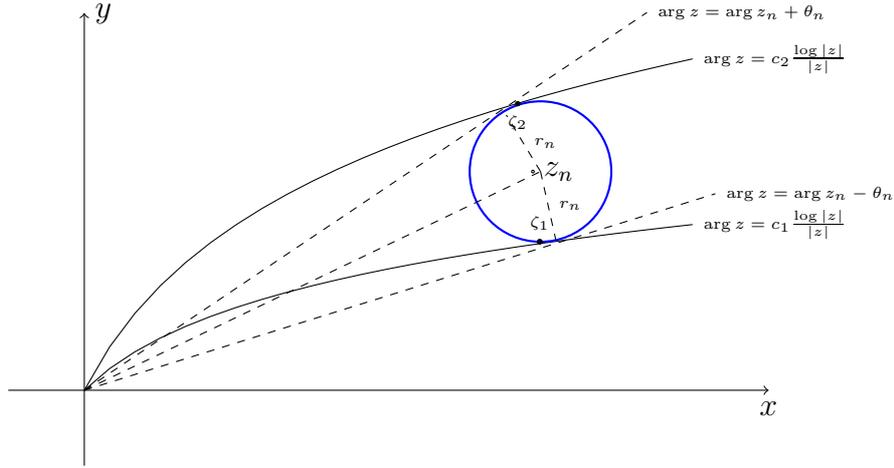
\begin{figure}[h]\label{measure}
    \begin{center}
    \begin{tikzpicture}
    \draw[->](0,0)--(10,0)node[left,below]{$x$};
    \draw[->](1,-1)--(1,5)node[right]{$y$};
    \draw[domain=1:9] plot(\x,{ln(\x)})node[right,font=\tiny]{$\arg z =c_1\frac{\log|z|}{|z|}$};
     \draw[domain=1:9] plot(\x,{2*ln(\x)})node[right,font=\tiny]{$\arg z =c_2\frac{\log|z|}{|z|}$};
    \draw[thick,blue](7,2.9) circle [radius=26.5pt];
    \draw[thin](6.9,2.9) circle [radius=0.5pt]node[right]{$z_n$};
    \draw[thick](6.7,3.8) circle [radius=0.6pt]node[below=0.01pt,font=\tiny]{$\zeta_2$};
    \draw[thick](6.99,1.97) circle [radius=0.7pt]node[above=0.1pt,font=\tiny]{$\zeta_1$};
    \draw[-,dashed](1,0)--(7,2.9);
     \draw[-,dashed](7,2.9) to node[right,font=\tiny]{$r_n$} (6.55,3.65);
      \draw[-,dashed](7,2.9) to node[right,font=\tiny]{$r_n$} (7.2,2);
     \draw[-,dashed][domain=1:8.4] plot(\x,{0.677*(\x-1)})node[above,right,font=\tiny]{$\arg z=\arg z_n+\theta_n$};
     \draw[-,dashed][domain=1:9.3] plot(\x,{0.314*(\x-1)})node[above,right,font=\tiny]{$\arg z=\arg z_n-\theta_n$};
    \end{tikzpicture}
    \end{center}
	\begin{quote}
    \caption{The disc $D_n$ and the asymptotic curves in the case $p=1$.}
    \end{quote}
    \end{figure}

We have $\theta_n\leq 2r_n/|z_n|\to 0$ as $n\to\infty$, and
	\begin{eqnarray*}
	c_2-c_1&=&\frac{|\zeta_2|^p\arg(\zeta_2)}{\log |\zeta_2|}-\frac{|\zeta_1|^p\arg(\zeta_1)}{\log |\zeta_1|}\\
	&\leq& \frac{(|z_n|+r_n)^p\arg(\zeta_2)}{\log (|z_n|+r_n)}-\frac{(|z_n|-r_n)^p\arg(\zeta_1)}{\log (|z_n|-r_n)}\\
	&\leq & \frac{|z_n|^p\left(1+\frac{r_n}{|z_n|}\right)^p(\arg(z_n)+\theta_n)}{\log (|z_n|+r_n)}-\frac{|z_n|^p\left(1-\frac{r_n}{|z_n|}\right)^p(\arg(z_n)-\theta_n)}{\log (|z_n|-r_n)}\\
	&\leq & \frac{|z_n|^p\arg(z_n)+2|z_n|^p\theta_n}{\log (|z_n|-r_n)}\left(\left(1+\frac{r_n}{|z_n|}\right)^p-\left(1-\frac{r_n}{|z_n|}\right)^p\right).
	\end{eqnarray*}
Using the estimate
	\begin{eqnarray*}
	\left(1+\frac{r_n}{|z_n|}\right)^p-\left(1-\frac{r_n}{|z_n|}\right)^p	
	&\leq& 2^p\left(1-\left(\frac{|z_n|-r_n}{|z_n|+r_n}\right)^p\right)\\
	&=& 2^pp\int_\frac{|z_n|-r_n}{|z_n|+r_n}^1x^{p-1}\, dx\leq \frac{2^{p+1}pr_n}{|z_n|},
	\end{eqnarray*}
we conclude that
	\begin{eqnarray*}
	c_2-c_1 &\leq & \frac{2^{p+1}p|z_n|^{p-1}r_n(\arg(z_n)+2\theta_n)}{\log (|z_n|-r_n)}\leq \frac{2^{p}p\pi |z_n|^{p-1}r_n}{\log (|z_n|/2)}.
	\end{eqnarray*}
	
Let $\varepsilon>0$. Then there exists an $N(\varepsilon)\in\N$ such that
$\sum_{n=N(\varepsilon)}^\infty \frac{2^{p}p\pi |z_n|^{p-1}r_n}{\log (|z_n|/2)}<\varepsilon$. Let $C_\varepsilon\subset (0,\infty)$ 
denote the set of
values $c$ such that the curve $\Lambda^+(c)$ meets at least one of
the discs $D_n$, where $n\geq N(\varepsilon)$. Then $C_\varepsilon$ has
linear measure $<\varepsilon$. Since $C$ is contained in all the sets
$C_\varepsilon$, $\varepsilon>0$, it follows that $C$ has linear measure
zero.	
\end{proof}

Through the rest of this section, let $k:(0,\infty)\to [1,\infty)$ denote any non-decreasing function. To simplify the notation, we restrict to the growth rate $k(r)=O\left(r^\sigma\right)$ for some $\sigma>0$, even though the results that will follow would allow even faster growth. Let $f$ be an exponential polynomial in the normalized form \eqref{standard-f}, and let $\{z_n\}$ denote the sequence of zeros of $f$ and of all of its transcendental coefficients $H_j$, listed according to multiplicities and ordered according to increasing moduli. Finally, let $\Ce(f,k)$ denote the collection of discs $|z-z_n|\leq 1/k(|z_n|)$. For simplicity, we may also assume that a disc $|z|<R$ for a suitably large $R>0$ is included in  $\Ce(f,k)$.

The next auxiliary result is obtained by modifying the proof of \cite[Theorem~2.4]{HITW}, see also the remark following \cite[Theorem~2.4]{HITW}.

\begin{lemma}\label{one-leading-term}
Let $f$ be an exponential polynomial in the normalized form \eqref{standard-f}, where
we suppose that $\rho(H_j)\leq q-p$ for $1\leq p\leq q$.
Then there exist constants $A>q$ and $c>0$ with the following properties:
If $\Lambda$ denotes the domain in between any two consecutive zero-domains
$\Lambda_p(\theta^*_1,c)$ and $\Lambda_p(\theta^*_2,c)$ of $f$, and if $z\in\Lambda\setminus\Ce(f,k)$,
then there exists a unique index $j_0$ such that
	\begin{equation}\label{f}
	f(z) = H_{j_0}(z)e^{w_{j_0}z^q}(1+\veps_{q-p}(r)),
    \end{equation}
where $|\veps_k(r)|\leq B\exp\left(-Ar^{k}\log r\right)$, $k\geq 0$,
$|z|=r$, and $B>0$ is some constant.
Denote
    $$
    G_{j}(z)=H'_{j}(z)+qw_{j}z^{q-1}H_{j}(z).
    $$
For the same values of $z$ and $j_0$ as above, we have
    \begin{equation}\label{f-prime}
	f'(z) =\left\{ \begin{array}{ll}
    G_{j_0}(z)e^{w_{j_0}z^q}(1+\veps_{q-p}(r)), & \ \textnormal{if $j_0\neq 0$},\\
    H_0'(z)+\veps_{q-p}(r), & \ \textnormal{if $j_0=0$}.
    \end{array}\right.
    \end{equation}
\end{lemma}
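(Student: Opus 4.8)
The plan is to start from the normalized form \eqref{standard-f} and rewrite $f$ by factoring out the exponential term that is dominant in the strip $\Lambda$. Fix a point $z=re^{i\theta}$ with $z\in\Lambda\setminus\Ce(f,k)$. Since $\Lambda$ lies strictly between two consecutive zero-domains $\Lambda_p(\theta^*_1,c)$ and $\Lambda_p(\theta^*_2,c)$, by Lemma~\ref{indicator-lemma} (with $c>2a$) the argument $\theta$ lies in $E_{j_0}(r)$ for a single index $j_0$, i.e. there is a unique $j_0$ with
	$$
	\Re\left(w_{j_0}e^{iq\theta}\right)>\max_{k\neq j_0}\Re\left(w_ke^{iq\theta}\right)+ac\frac{\log r}{r^p}.
	$$
The uniqueness of $j_0$ is exactly the content of the claim ``precisely one exponential term is dominant'' from the discussion after Theorem~\ref{length}; I would note here that $j_0$ must correspond to a vertex of $\co(W_f)$, since only vertex terms can be strictly dominant on an open set of arguments, but this fact is not logically needed for the proof of the estimate itself.

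Next I would carry out the main estimate. Writing $f(z)=H_{j_0}(z)e^{w_{j_0}z^q}\bigl(1+\sum_{k\neq j_0}\tfrac{H_k(z)}{H_{j_0}(z)}e^{(w_k-w_{j_0})z^q}\bigr)$, I need to bound each term $\tfrac{H_k(z)}{H_{j_0}(z)}e^{(w_k-w_{j_0})z^q}$. For the exponential factor, $\bigl|e^{(w_k-w_{j_0})z^q}\bigr|=\exp\bigl(r^q(\Re(w_ke^{iq\theta})-\Re(w_{j_0}e^{iq\theta}))\bigr)\le\exp\bigl(-ac\,r^{q-p}\log r\bigr)$. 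For the coefficient quotient I use that each $H_j$ has order $\le q-p$ and finite type, so $\log M(r,H_k)\le A_0 r^{q-p}\log r$, while Corollary~\ref{minmod-cor} (equivalently Lemma~\ref{minmods-lemma}) applied to $H_{j_0}$ with the chosen $k$ gives $\log|H_{j_0}(z)|\ge -A_0 r^{q-p}\log r$ for $z\notin\Ce(f,k)$ — this is precisely why the discs $\Ce(f,k)$ were defined to contain the zeros of all transcendental $H_j$. Combining, $\bigl|\tfrac{H_k(z)}{H_{j_0}(z)}e^{(w_k-w_{j_0})z^q}\bigr|\le\exp\bigl((2A_0-ac)r^{q-p}\log r\bigr)$, and choosing $c$ large enough that $ac-2A_0>A$ for any prescribed $A>q$ yields the bound $|\veps_{q-p}(r)|\le B\exp(-Ar^{q-p}\log r)$ after summing the $m$ terms. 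This proves \eqref{f}.

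For \eqref{f-prime} I would differentiate \eqref{one-term-out}: since $\tfrac{d}{dz}\bigl(H_j(z)e^{w_jz^q}\bigr)=G_j(z)e^{w_jz^q}$, we get $f'(z)=G_{j_0}(z)e^{w_{j_0}z^q}+\sum_{k\neq j_0}G_k(z)e^{w_kz^q}$. If $j_0\neq 0$, then $w_{j_0}\neq 0$, $G_{j_0}\not\equiv 0$ has order $\le q-p$ (it is $H'_{j_0}$ plus a polynomial times $H_{j_0}$), and applying the same min-modulus/max-modulus argument to $\tfrac{G_k(z)}{G_{j_0}(z)}e^{(w_k-w_{j_0})z^q}$ — possibly enlarging $\Ce(f,k)$ to also exclude zeros of the finitely many $G_j$, and enlarging $c$ — gives $f'(z)=G_{j_0}(z)e^{w_{j_0}z^q}(1+\veps_{q-p}(r))$. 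If $j_0=0$, then $w_0=0$, so $G_0(z)=H'_0(z)$ and the dominant term is simply $H'_0(z)$; the remaining terms $G_k(z)e^{w_kz^q}$, $k\neq 0$, have modulus at most $\exp(A_0r^{q-p}\log r)\cdot\exp(-ac\,r^{q-p}\log r)\le\veps_{q-p}(r)$ for $c$ large, giving $f'(z)=H'_0(z)+\veps_{q-p}(r)$.

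The main obstacle is establishing the lower bound $\log|H_{j_0}(z)|\ge -O(r^{q-p}\log r)$ off the exceptional discs while keeping the discs small enough that Lemma~\ref{meet-lemma} will later apply — but this is handled cleanly by Lemma~\ref{minmods-lemma}/Corollary~\ref{minmod-cor} with the growth $k(x)=x^q$ (or $x^q\log^2 x$) fixed earlier, which is exactly why that particular growth rate was flagged as ``needed later on'' in the proof of Theorem~\ref{T-theorem}. The only subtlety is bookkeeping: one must choose a single $c>0$ and a single enlarged disc collection $\Ce(f,k)$ that work simultaneously for all strips $\Lambda$, for $f$, and for all the auxiliary functions $H_j$ and $G_j$; since there are only finitely many of each, taking maxima of constants and unions of exceptional discs causes no difficulty.
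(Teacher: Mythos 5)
Your argument is correct and follows essentially the same route as the paper's proof: factor out the dominant exponential term identified via Lemma~\ref{indicator-lemma}, bound the remaining terms using the maximum-modulus estimate $\log M(r,H_k)=O(r^{q-p}\log r)$ and the minimum-modulus bound of Corollary~\ref{minmod-cor} off the discs $\Ce(f,k)$, choose $c$ large enough to absorb these, and then repeat the argument for $f'$ with the coefficients $G_j$ (treating $j_0=0$ additively). Your explicit remark that $\Ce(f,k)$ may need to be enlarged to exclude zeros of the $G_j$ is a point the paper glosses over, but it is a harmless bookkeeping matter rather than a difference in method.
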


\begin{proof}
(1) In order to prove \eqref{f}, we first
suppose that $q\geq 2$ and that $1\leq p\leq q-1$. The latter means that all coefficients $H_j(z)$
are assumed to be transcendental. There exist constants $A>q$ and $r_0>0$ such that
	\begin{equation}\label{up}
	\log M(r,H_j)\leq Ar^{q-p},\quad j\in\{0,\ldots,m\},
	\end{equation}
whenever $r\geq r_0$. We conclude by Corollary~\ref{minmod-cor} that there exists a constant $A>q$ such that
	\begin{equation}\label{down}
	\log |H_j(z)|\geq -Ar^{q-p}\log r,\quad j\in\{0,\ldots,m\},
	\end{equation}
whenever $z\not\in\Ce(f,k)$. We may suppose that the constant $A$ in \eqref{down}
is the same as that in \eqref{up} by choosing the larger of the two.
Noting that $r^q\psi_k(\theta)=\Re\left(w_kz^q\right)$, we make use of Lemma~\ref{indicator-lemma} by choosing
$c>\frac{4A}{a}$: If $z=re^{i\theta}\in\Lambda$ and if $r$
is large enough, then there exists a unique index $j_0$ such that
    \begin{equation}\label{inequality}
    \Re \left(w_{j_0}z^q\right)\geq \Re\left(w_kz^q\right)+4Ar^{q-p}\log r
    \end{equation}
for all $k\neq j_0$. If in addition $z\not\in\Ce(f,k)$, then the estimates
\eqref{up}--\eqref{inequality} applied to \eqref{standard-f} yield
    \begin{eqnarray*}
    \left|f(z)e^{-w_{j_0}z^q}\right| &\geq&
    |H_{j_0}(z)|-\sum_{k\neq j_0}|H_k(z)||e^{(w_k-w_{j_0})z^q}|\\
    &\geq&
    |H_{j_0}(z)|-\exp\left(\log m+Ar^{q-p}-4Ar^{q-p}\log r\right),
	\end{eqnarray*}
from which
	$$
	\frac{|f(z)|}{\left|H_{j_0}(z)e^{w_{j_0}z^q}\right|}\geq 1-\exp\left(-Ar^{q-p}\log r\right).
	$$
On the other hand,
	$$
	\frac{|f(z)|}{\left|H_{j_0}(z)e^{w_{j_0}z^q}\right|}
	\leq 1+\sum_{k\neq j_0}\frac{|H_k(z)|}{|H_{j_0}(z)|}\left|e^{(w_k-w_{j_0})z^q}\right|
	\leq 1+\exp\left(-Ar^{q-p}\log r\right),
	$$
where $z\in\Lambda\setminus\Ce(f,k)$. This discussion covers the case $w_{j_0}=w_0=0$ also, that is,
the case when $f(z)-H_0(z)=\veps_{p-q}(r)$ in $\lambda\setminus\Ce(f,k)$.

If some (but not all) coefficients are polynomials, the previous reasoning simplifies. Indeed, if a particular
coefficient $H_j(z)$ is a polynomial, then the growth of $|H_j(z)|$ is comparable to $|z|^{\deg(H_j)}$
for $r$ large enough. Consideration of the set $\Ce(f,k)$ is not needed for this particular coefficient,
as we only need to assume that $|z|$ is large enough.

Suppose then that $q\geq 1$ is any integer and $p=q$, that is, all of the coefficients $H_j(z)$ are polynomials.
This covers the remaining case. There exist constants $A_1>0$ and $A_2>0$ such that
	$$
	|H_j(z)|\leq A_1|z|^d\quad \textrm{and}\quad |H_j(z)|\geq A_2|z|^{d_j},
	$$
where $d_j=\deg(H_j)$ and $d=\max\{d_j\}$. Suppose that
$z=re^{i\theta}\in\Lambda$, and that $r$ is large enough.
Let $A=\max\{d,2\}$. We choose $c>\frac{4A}{a}$ in Lemma~\ref{indicator-lemma},
and find that there exists a unique index $j_0$ such that
    $$
    \left|f(z)e^{-w_{j_0}z}\right| \geq |H_{j_0}(z)|-A_1mr^de^{-4A\log r},
	$$
from which
	$$
	\frac{|f(z)|}{\left|H_{j_0}(z)e^{w_{j_0}z^q}\right|}\geq 1-\exp\left(-A\log r\right),\quad z\in\Lambda\setminus\Ce.
	$$
On the other hand,
	$$
	\frac{|f(z)|}{\left|H_{j_0}(z)e^{w_{j_0}z^q}\right|}
	\leq 1+\sum_{k\neq j_0}\frac{|H_k(z)|}{|H_{j_0}(z)|}\left|e^{(w_k-w_{j_0})z^q}\right|
	\leq 1+\exp\left(-A\log r\right),
	$$
where $z\in\Lambda\setminus\Ce(f,k)$. This completes the proof of \eqref{f}.

(2) In order to prove \eqref{f-prime}, we first notice that
	$$
	f'=G_0(z)e^{w_0z^q}+G_1(z)e^{w_1z^q}+\cdots  +G_m(z)e^{w_mz^q},
	$$
where $G_j(z)=H'_j(z)+qw_jz^{q-1}H_j(z)$ for $j\geq 0$. If $G_j(z)\equiv 0$, then
either $H_j(z)$ is a constant and $w_j=0$ or $\rho(H_j)=q$. The latter is clearly
impossible, while the former is possible only in the case when $j=0$.
Hence $f'$ is an exponential polynomial
with coefficients being of order $\leq q-p$, and shares the leading coefficients with $f$, except possibly $w_0$.
Thus, by considering separately the cases $j_0\neq 0$ and $j_0=0$, the proof of Part (1) applies to $f'$,
and we obtain \eqref{f-prime} for the same index $j_0$ that appears in \eqref{f}.
This completes the proof.
\end{proof}

If any of the coefficients $H_j$ in \eqref{standard-f} is transcendental, it also can be represented in an analogous normalized form as $f$, where the coefficients are either polynomials or exponential polynomials. Proceeding from one generation of transcendental coefficients to the next, the order
of growth decreases at least by one. Obviously there are at most $q-1$ generations of transcendental descendant coefficients all together.

Let $\{z_n\}$ denote the sequence of zeros of $f$, of the transcendental coefficients $H_j$ of $f$, and of all transcendental descendants of these coefficients, listed according to the multiplicities and ordered according to increasing moduli. Let then $\Ce_0(f,k)$ denote the collection of all discs $|z-z_n|\leq 1/k(|z_n|)$, and suppose that a  suitably large disc $|z|<R$ is also included in $\Ce_0(f,k)$. Clearly,
$\Ce(f,k)$ is a sub-collection of $\Ce_0(f,k)$.

Let $\Theta$ denote the set consisting of the critical angles of $f$
 in \eqref{standard-f}, the critical angles of the transcendental coefficients $H_j$ of $f$, and the critical angles of all transcendental descendants of these coefficients. Some of the critical angles of $f$ may coincide with those of its coefficients or descendant coefficients. Nevertheless, $\Theta$ is a finite subset of $[0,2\pi)$, so the elements
$\theta_j$ of $\Theta$ can be ordered, say $0\leq \theta_1<\theta_2<\cdots<\theta_l<2\pi$. Set $\theta_{l+1}=\theta_1$.

\begin{lemma}\label{logderivative-polynomial}
Let $f$ be an exponential polynomial in the normalized form \eqref{standard-f}, where
we suppose that $\rho(H_j)\leq q-p$ for $1\leq p\leq q$. Let $\theta_j,\theta_{j+1}$ be two consecutive elements of $\Theta$,
and let $\Lambda$ be the domain in between the domains
$\Lambda_p(\theta_j,c)$ and $\Lambda_p(\theta_{j+1},c)$, where $c>0$
is sufficiently large.
If $z\in\Lambda\setminus\Ce_0(f,k)$,
then there exists a unique index $j_0$ and constants
$C_0,\ldots,C_{q-p-1}\in\C$ such that
    \begin{equation}\label{representation-logderivative}
    \frac{f'(z)}{f(z)}=qw_{j_0}z^{q-1}+C_{q-p-1}z^{q-p-1}+\cdots+C_0+O\big(r^{-1}\big).
    \end{equation}
\end{lemma}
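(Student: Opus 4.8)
The plan is to isolate on $\Lambda$ the dominant exponential term of $f$, take the logarithmic derivative, and then descend recursively through the coefficient hierarchy. First I would apply Lemma~\ref{one-leading-term} to $f$, with the constant $c$ taken large enough (which, by the construction in that lemma, simultaneously enlarges the decay constant $A$). Since $\Ce(f,k)\subseteq\Ce_0(f,k)$ and the critical angles of $f$ form a subset of $\Theta$, the domain $\Lambda$ lies between two consecutive zero-domains of $f$; hence for $z\in\Lambda\setminus\Ce_0(f,k)$ there is a unique index $j_0$ with
\[
f(z)=H_{j_0}(z)e^{w_{j_0}z^q}\left(1+\delta(z)\right),\qquad
\delta(z)=\sum_{k\neq j_0}\frac{H_k(z)}{H_{j_0}(z)}\,e^{(w_k-w_{j_0})z^q},
\]
where, as the proof of Lemma~\ref{one-leading-term} shows, $\sum_{k\neq j_0}\left|\tfrac{H_k}{H_{j_0}}e^{(w_k-w_{j_0})z^q}\right|\leq B\exp\!\left(-Ar^{q-p}\log r\right)$ with $A>q$ (and $\leq Br^{-A}$ in the polynomial-coefficient case $p=q$). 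This covers the possibility $j_0=0$, in which case $w_0=0$.

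Differentiating $\log f$ yields, on $\Lambda\setminus\Ce_0(f,k)$,
\[
\frac{f'(z)}{f(z)}=qw_{j_0}z^{q-1}+\frac{H_{j_0}'(z)}{H_{j_0}(z)}+\frac{\delta'(z)}{1+\delta(z)},
\]
valid also when $j_0=0$ (the first term then vanishes). The key elementary estimate is $\delta'/(1+\delta)=O(r^{-1})$: expanding
\[
\delta'=\sum_{k\neq j_0}\frac{H_k}{H_{j_0}}e^{(w_k-w_{j_0})z^q}\left(\frac{H_k'}{H_k}-\frac{H_{j_0}'}{H_{j_0}}+q(w_k-w_{j_0})z^{q-1}\right),
\]
each prefactor $\tfrac{H_k}{H_{j_0}}e^{(w_k-w_{j_0})z^q}$ is exponentially small, whereas the factors $|H_k'/H_k|$, $|H_{j_0}'/H_{j_0}|$ are controlled crudely by $\exp\!\left(O(r^{q-p}\log r)\right)$ via Corollary~\ref{minmod-cor} together with the finite-type bound $\log M(r,H_k)=O(r^{q-p})$, and the remaining factor is $O(r^{q-1})$. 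Since there are only finitely many indices $k$, a single sufficiently large choice of $c$ makes the exponential decay beat all of these, so $\delta'=O(r^{-1})$ and $|1+\delta|\geq 1/2$ for $r$ large.

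It then remains to understand $H_{j_0}'/H_{j_0}$. If $H_{j_0}$ is an ordinary polynomial (in particular whenever $p=q$), then $H_{j_0}'/H_{j_0}=\deg(H_{j_0})/z+O(z^{-2})=O(r^{-1})$ on $\Lambda\setminus\Ce_0(f,k)$, because the zeros of $H_{j_0}$ lie inside $\Ce_0(f,k)$; this is the base case, and it already gives the conclusion with $C_0=\dots=C_{q-p-1}=0$. If $H_{j_0}$ is transcendental, it is an exponential polynomial of order $q_1:=\rho(H_{j_0})\leq q-p<q$, the critical angles of $H_{j_0}$ and of all of its transcendental descendants lie in $\Theta$, and the zeros of $H_{j_0}$ and of all of its transcendental descendants lie in $\Ce_0(f,k)$. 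Applying the same argument to $H_{j_0}$ — formally, an induction on the order — one gets that $H_{j_0}'/H_{j_0}$ equals $q_1\widehat{w}\,z^{q_1-1}$ plus a polynomial of degree $\leq q_1-1\leq q-p-1$ plus $O(r^{-1})$. Substituting this into the previous display and collecting terms (the coefficients of $z^{q_1},\dots,z^{q-p-1}$ being zero) gives
\[
\frac{f'(z)}{f(z)}=qw_{j_0}z^{q-1}+C_{q-p-1}z^{q-p-1}+\dots+C_0+O\!\left(r^{-1}\right),\qquad z\in\Lambda\setminus\Ce_0(f,k),
\]
which is the assertion.

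The step I expect to be the main obstacle is the bookkeeping that makes the recursion legitimate: one must verify that the single domain $\Lambda$, built from the global angle set $\Theta$ and the strips $\Lambda_p(\theta_j,c)$, serves simultaneously as a ``good'' domain — a component of the complement of the relevant union of zero-strips, to which Lemma~\ref{one-leading-term} and its inductive analogue apply — for $f$ and for every transcendental coefficient and transcendental descendant met along the descent. This is precisely the reason $\Theta$ was defined to gather the critical angles of every generation of descendants and $\Ce_0(f,k)$ to gather all of their zeros; carrying it out requires comparing the strip widths attached to the various generations. A secondary technical point is to keep the error uniformly $O(r^{-1})$ (not merely $o(1)$): the constant $c$, and with it the decay rate $A$ in Lemma~\ref{one-leading-term}, must be fixed once and chosen large enough to dominate the finitely many crude coefficient bounds appearing at every level.
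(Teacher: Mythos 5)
Your proposal is correct and follows essentially the same route as the paper: isolate the dominant term via Lemma~\ref{one-leading-term}, then recurse through the transcendental coefficients and their descendants, with $\Theta$ and $\Ce_0(f,k)$ assembled precisely so that a single choice of $c$ (hence of the decay constant $A$) works for every generation. The only cosmetic difference is that the paper obtains the logarithmic derivative by dividing the ready-made asymptotic \eqref{f-prime} for $f'$ by \eqref{f} for $f$, arriving at $\frac{f'}{f}=\bigl(qw_{j_0}z^{q-1}+\frac{H_{j_0}'}{H_{j_0}}\bigr)(1+\veps_{q-p}(r))$, rather than differentiating $\log(1+\delta)$ and bounding $\delta'$ directly; both yield the same conclusion.
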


\begin{proof}
We will make use of the proof of Lemma~\ref{one-leading-term} not only for $f$
but also for the coefficients and the descendant coefficients of $f$.
For each function the proof is used, new constants $A>q$ and $c>0$ are found.
Since there are at most finitely many functions
involved, we may choose $A$ and $c$ to be the maxima of all these corresponding coefficients.

Independently on whether $w_{j_0}=0$ or $w_{j_0}\neq 0$,
it follows directly from \eqref{f} and \eqref{f-prime} that
    \begin{equation}\label{log-derivative}
    \frac{f'(z)}{f(z)} = \left(qw_{j_0}z^{q-1}+\frac{H_{j_0}'(z)}{H_{j_0}(z)}\right)(1+\veps_{q-p}(r)).
    \end{equation}
Suppose that $H_{j_0}(z)$ is a polynomial. Then \eqref{log-derivative} yields
    \begin{eqnarray*}
    \frac{f'(z)}{f(z)}
    &=& qw_{j_0}z^{q-1}+O\Big(r^{-1}(1+\veps_{q-p}(r))\Big)\\
    &&+O\Big(\exp\Big((q-1)\log r-Ar^{q-p}\log r\Big)\Big)\\
    &=& qw_{j_0}z^{q-1}+O\Big(r^{-1}\Big),
    \quad z\in\Lambda\setminus\Ce_0(f,k),
    \end{eqnarray*}
because of $A>q$. This proves \eqref{representation-logderivative} in the case when $H_{j_0}(z)$ is a polynomial.

Suppose next that $H_{j_0}(z)$ is transcendental. Then $q-p\geq 1$, so we may
write $H_{j_0}(z)$ in the normalized form with coefficients being either exponential polynomials
of order $\leq q-p-1$ or ordinary polynomials.
Analogously as in \eqref{log-derivative}, the proof of Lemma~\ref{one-leading-term},
applied to $H_{j_0}(z)$ instead of $f$, yields
    \begin{equation}\label{K}
    \frac{H_{j_0}'(z)}{H_{j_0}(z)} = \left(C_{q-p-1}z^{q-p-1}+\frac{K'(z)}{K(z)}\right)(1+\veps_{q-p-1}(r)),
    \end{equation}
where $C_{q-p-1}\in\C$ and where $K(z)$ is either an exponential polynomial of order $\leq q-p-1$ or an
ordinary polynomial in $z$.

If $K(z)$ in \eqref{K} is a polynomial, then we combine \eqref{log-derivative} and
\eqref{K} for
    $$
    \frac{f'(z)}{f(z)} = qw_{j_0}z^{q-1}+C_{q-p-1}z^{q-p-1}+O\Big(r^{-1}\Big),\quad z\in\Lambda\setminus\Ce_0(f,k).
    $$
If $K(z)$ is transcendental, then we continue inductively in this fashion by reducing the order
on each step by one.
Eventually the descendant coefficient must reduce down to a polynomial, and, as such, its
logarithmic derivative is of growth $O\left(r^{-1}\right)$. This gives us the representation
\eqref{representation-logderivative}.
\end{proof}

Finally, we remind the reader of the following well-known standard growth estimate for logarithmic derivatives by Gundersen~\cite{Gundersen}.

\begin{lemma}\label{ld-lemma}
Let $f$ be a meromorphic function, let $k,j$ be integers such that $k>j\geq 0$, and let $\alpha>1$. Then there exists a set $E\subset(1,\infty)$ that has finite logarithmic measure, and there exists a constant
$A > 0$ depending only on $\alpha,k,j$, such that for all $z$ satisfying $|z|\not\in E\cup [0,1]$, we have
	\begin{equation}\label{logderivative-EP}
	\left|\frac{f^{(k)}(z)}{f^{(j)}(z)}\right|
    \leq A\left(\frac{T(\alpha r,f)}{r}
    +\frac{n_j(\alpha r)}{r}\log^{\alpha}r \log n_j(\alpha r)\right)^{k-j},
	\end{equation}
where $r = |z|$ and $n_j(r)$ denotes the number of zeros and poles of $f^{(j)}$ in $|z|<r$.	
\end{lemma}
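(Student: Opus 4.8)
The plan is to follow the classical Poisson--Jensen route that underlies Gundersen's proof in \cite{Gundersen}. First I would reduce to estimating $|g^{(\nu)}(z)/g(z)|$ for an arbitrary meromorphic function $g$ and an integer $\nu\ge 1$: taking $g=f^{(j)}$ and $\nu=k-j$ produces $|f^{(k)}/f^{(j)}|$, the zeros and poles of $g=f^{(j)}$ that enter the bound are counted exactly by $n_j$, and $T(\,\cdot\,,f^{(j)})$ may be replaced by $(j+1)T(\,\cdot\,,f)+O\big(\log(rT(r,f))\big)$ via $N(r,f^{(j)})\le(j+1)N(r,f)$ and the elementary proximity-function form of the lemma on the logarithmic derivative, at the cost of one more exceptional set of finite logarithmic measure. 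So fix $z$ with $|z|=r$, set $R=\tfrac12(1+\alpha)r$, and let $a_1,a_2,\dots$ and $b_1,b_2,\dots$ be the zeros and poles of $g$ in $|z|<R$, repeated with multiplicity; there are at most $n_j(R)\le n_j(\alpha r)$ of them. By the Poisson--Jensen formula $\log|g(z)|$ is the real part of an analytic function made of an integral over $|w|=R$ and the two Blaschke-type sums over the $a_\mu$ and the $b_\nu$, and the derivative of that analytic function is $g'(z)/g(z)$.

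Next I would differentiate this representation $m-1$ further times, for $1\le m\le\nu$, to get $(\log g)^{(m)}(z)$ as a sum of three pieces: an integral $\frac{c_m}{2\pi}\int_0^{2\pi}\log|g(Re^{i\phi})|\,\frac{2Re^{i\phi}}{(Re^{i\phi}-z)^{m+1}}\,d\phi$; the ``reflected'' rational sums $c_m\sum_\mu\overline{a}_\mu^{\,m}(R^2-\overline{a}_\mu z)^{-m}$ and the analogous sum over the $b_\nu$; and the ``near'' rational sums $\pm(m-1)!\sum_\mu(z-a_\mu)^{-m}$ and the analogous sum over the $b_\nu$. For $|z|=r$ the first piece is at most $\frac{c_m'R}{(R-r)^{m+1}}\big(m(R,g)+m(R,1/g)\big)=O\big(r^{-m}T(R,g)\big)$ by Jensen's formula and the choice of $R$; the reflected sums are $O\big(n_j(\alpha r)\,r^{-m}\big)$ because $|R^2-\overline{a}_\mu z|\ge R(R-r)$ and $|a_\mu|<R$. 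These two are harmless. The only delicate term is the near-sum $\sum_{|a_\mu|<R}|z-a_\mu|^{-m}$ and its pole counterpart, which must be controlled \emph{uniformly in $z$} over the whole circle $|z|=r$.

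I expect this uniform bound for the near-sums to be the main obstacle, and it is the source of the exceptional set. Since $|z-c_\mu|\ge\big||z|-|c_\mu|\big|$ for every $z$ with $|z|=r$, one attaches to each zero and pole $c_\mu$ of $f^{(j)}$ a radius $\delta_\mu>0$ with $\sum_\mu\delta_\mu<\infty$ and puts $E_0=\bigcup_\mu\{t>0:\big|\,t-|c_\mu|\,\big|<\delta_\mu\}$, a set of finite linear measure; then for $r\notin E_0$ one gets $|z-c_\mu|\ge\delta_\mu$ simultaneously for all $\mu$ and all $z$ of modulus $r$. After also discarding, by a Borel-type lemma, a set of finite logarithmic measure on which $n_j(2r)$ is abnormally large compared with $n_j(r)$, and then grouping the $c_\mu$ by angular distance from $\arg z$ and by the dyadic size of $|c_\mu|$, one reaches $\sum_{|a_\mu|<R}|z-a_\mu|^{-m}\le\big(C\,n_j(\alpha r)\log^{\alpha}r\,\log n_j(\alpha r)/r\big)^{m}$. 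The factor $\log^{\alpha}r$ and the factor $\log n_j$ are precisely what is lost in the choice of the $\delta_\mu$ and in the dyadic--angular bookkeeping; the essential difficulty is that the estimate has to hold at every point of a circle of admissible radius while the zeros of $f^{(j)}$ may cluster, which forces one to throw away radii rather than points and to track the logarithmic factors carefully.

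Finally I would assemble. With $B=\frac{A}{r}\big(T(\alpha r,f)+n_j(\alpha r)\log^{\alpha}r\,\log n_j(\alpha r)\big)$, the three term-by-term estimates, together with the elementary inequality $\sum|z-a_\mu|^{-m}\le\big(\sum|z-a_\mu|^{-1}\big)^{m}$, give $|(\log g)^{(m)}(z)|\le B^{m}$ for $1\le m\le\nu$ whenever $r$ avoids the union $E$ of the finitely many exceptional sets built above (one for each $m$, plus the one from passing from $T(\,\cdot\,,f^{(j)})$ to $T(\,\cdot\,,f)$). Since $g^{(\nu)}/g$ is a fixed polynomial with positive integer coefficients, isobaric of weight $\nu$ in $(\log g)',\dots,(\log g)^{(\nu)}$, substituting these bounds and collecting powers gives $|f^{(k)}(z)/f^{(j)}(z)|=|g^{(\nu)}(z)/g(z)|\le C_\nu B^{\nu}$, which is the asserted inequality once $C_\nu A^{\nu}$ is absorbed into the constant; and $E$ still has finite logarithmic measure.
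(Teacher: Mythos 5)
This lemma is quoted in the paper as a known result with no proof given; the authors simply cite Gundersen's 1988 paper. Your outline (Poisson--Jensen representation, differentiation, splitting into the integral term, the reflected sums and the near sums, exclusion of radii rather than points via $|z-c_\mu|\ge\bigl||z|-|c_\mu|\bigr|$, and the Fa\`a di Bruno assembly of $g^{(\nu)}/g$ from $(\log g)',\dots,(\log g)^{(\nu)}$) is essentially the argument Gundersen himself gives, so there is nothing to compare against within this paper and no gap to report.
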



\section{Proof of Theorem \ref{length}}\label{proof1.sec}


The first assertion $n_{\lambda_p}(r)=O\left(r^{q-p}+\log r\right)$
is a simple consequence of \eqref{Stein-thm-p} and the standard
estimate $n(r)\leq (\log 2)^{-1}\left(N(2r)-N(r)\right)$ between
any counting function $n(r)$ and its integrated counterpart $N(r)$.
Thus it suffices to prove \eqref{zeros-n}, which is non-trivial.

Let $\Gamma$ denote any piecewise smooth positively oriented Jordan curve, and let $n(\Gamma)$ denote the zeros of $f$ in the domain bounded by $\Gamma$. By the argument principle, we have
	\begin{equation}\label{ArgumentP}
	n(\Gamma)=\frac{1}{2\pi i}\int_\Gamma \frac{f'(z)}{f(z)}\, dz,
	\end{equation}
provided that $f$ has no zeros on $\Gamma$. In addition, the curve $\Gamma$ needs to be separated from the zeros of $f$ so that we can use the minimum modulus estimate in Corollary~\ref{minmod-cor} as well as its further consequences in Section~\ref{lemmas-Polya}. Hence we need a smart choice for $\Gamma$.

In order to simplify the situation, we may suppose that the positive real axis is a
critical ray for $f$ by appealing to the
exponential polynomial $g(z)=f(e^{-i\theta^*}z)$, if necessary. Thus, we assume that $\theta^*=0$
is a critical angle for $f$, determined by its leading coefficients $w_j,w_k$ as in \eqref{cusp}.
It follows that $w_j$ and $w_k$ are the unique dominant leading coefficients of $f$
on the sides of the positive real axis. Without loss of generality, we may assume that $w_j$ determines the dominant term of $f$ below the $x$-axis
and that $w_k$ determines the dominant term of $f$ above the $x$-axis.
The uniqueness of the constants $w_j,w_k$ carries over
to the application of Lemma~\ref{logderivative-polynomial}.

Let $k(x)=x^{q+p-1}\log x$ and $r_n=1/k(|z_n|)$ for $x\geq e$ and $|z_n|\geq e$. For this particular $k$, we let $\Ce_0(f,k)$ denote the set discussed in Section~\ref{lemmas-Polya}. Then \eqref{convergence} clearly holds, and so, by Lemma~\ref{meet-lemma}, we can find $c>0$ such that $\partial\Lambda_p(0,c)$ meets at most finitely many discs $|z-z_n|\leq r_n$. This constant $c$ also takes into account the $p$-generalization \eqref{Stein-thm-p} of the result by Steinmetz. Denote by $\Lambda^+(c)$ and $\Lambda^-(c)$ the portions of $\Lambda_p(0,c)$ in the upper half-plane and the lower half-plane, respectively.

     \begin{figure}[h]\label{Integralpath}
    \begin{center}
    \begin{tikzpicture}
    \draw[->](0,0)--(10,0)node[left,below]{$x$};
    \draw[->](1,-4)--(1,4)node[right]{$y$};
   \draw[->][domain=10:8] plot(\x,{10*ln(\x)/\x^1.3})node[right,above=5pt,font=\tiny]{$\Gamma_3:z=te^{\frac{-ic\log t}{t^p}}$};
    \draw[-][domain=8:1] plot(\x,{10*ln(\x)/\x^1.3});
     \draw[->][domain=1:6] plot(\x,{-10*ln(\x)/\x^1.3});
     \draw[-][domain=6:10] plot(\x,{-10*ln(\x)/\x^1.3})node[right,below,font=\tiny]{$\Gamma_1:z=te^{\frac{-ic\log t}{t^p}}$};
    \draw[-,dashed] (8.6,-1.335) arc(-24:24:3.2)node[right=100pt,below=20pt,font=\tiny]{$\Gamma_2:z=re^{\theta}$};
     \draw[-,dashed] (5,-2) arc(-35:35:3.5)node[left=4pt,below=40pt,font=\tiny]{$\Gamma_4:z=Re^{\theta}$};
    \end{tikzpicture}
    \end{center}
    \caption{The domain bounded by the curve $\Gamma=\Gamma_1+\Gamma_2+\Gamma_3+\Gamma_4$.}
    \end{figure}
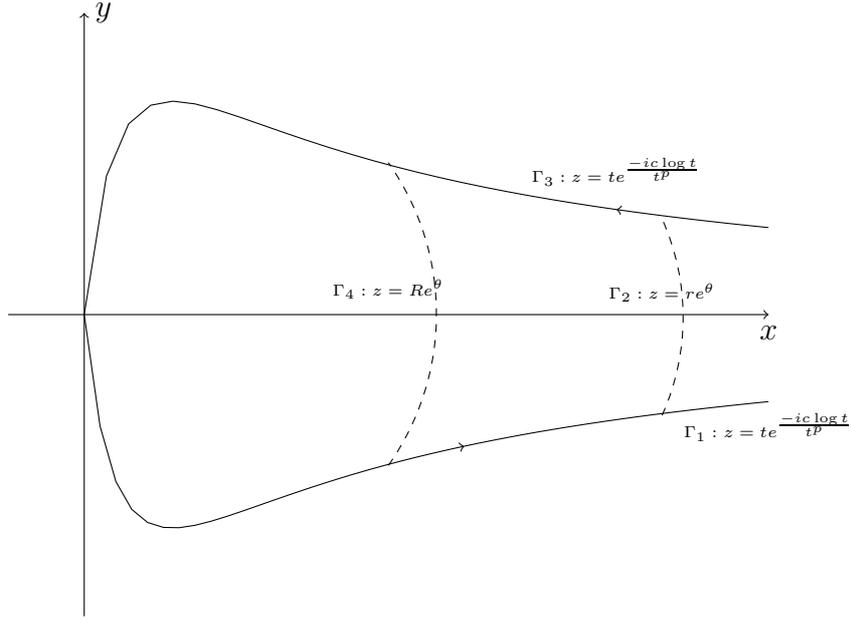	

Let $\alpha>1$, and let $E\subset (1,\infty)$ denote the exceptional set of finite logarithmic measure in Lemma~\ref{ld-lemma}, where $f$ is our exponential polynomial of order $q$, and $k=1$, $j=0$. Choose $R\in (e,\infty)\setminus E$ large enough such that the curves $\Lambda^+(c)$ and $\Lambda^-(c)$ do not meet any of the discs $|z-z_n|\leq r_n$ for $r\geq R$. Then choose any $r\in (R,\infty)\setminus E$.
Finally, let $\Gamma=\Gamma_1+\Gamma_2+\Gamma_3+\Gamma_4$, where
	$$
	\begin{array}{cll}
	\Gamma_1 &: z=te^{-ic\frac{\log t}{t^{p}}}, &\textnormal{where $t$ goes from $R$ to $r$},\\
	\Gamma_2 &: z=re^{i\theta},
	&\textnormal{where $\theta$ goes from $-c\frac{\log r}{r^{p}}$ to $c\frac{\log r}{r^{p}}$},\\
	\Gamma_3 &: z=te^{ic\frac{\log t}{t^{p}}}, &\textnormal{where $t$ goes from $r$ to $R$},\\
	\Gamma_4 &: z=Re^{i\theta},
	&\textnormal{where $\theta$ goes from $c\frac{\log R}{R^{p}}$ to $-c\frac{\log R}{R^{p}}$}.
	\end{array}
	$$
The idea is to keep $R$ fixed and eventually to let $r$ increase. Due to the construction, it is clear that $\Gamma$ has no zeros of $f$, even when $r\in (R,\infty)\setminus E$ is arbitrarily large. We find that
    $$
	I:=\int_{\Gamma_1} w_jqz^{q-1}\,dz=\int_{R}^{r}w_jqt^{q-1}e^{-iqc\frac{\log t}{t^p}}
	\left(1-\frac{1-p\log t}{t^{p+1}}ci\right)\,dt.
	$$
If we set $w_j=a_j+ib_j$, then
	\begin{equation*}
    \begin{split}
	\Im I=&\int_{R}^{r} b_jq t^{q-1} \bigg\{\cos\left(qc\frac{\log t}{t^p}\right)
	-\sin\left(qc\frac{\log t}{t^p}\right)\frac{1-p\log t}{t^{p+1}}c\bigg\}\,dt\\
	&-\int_{R}^{r} a_jq t^{q-1}\bigg\{\sin\left(qc\frac{\log t}{t^p}\right)+
	\cos\left(qc\frac{\log t}{t^p}\right)\frac{1-p\log t}{t^{p+1}}c\bigg\}\,dt.
	\end{split}
	\end{equation*}
Using the standard estimates $1-x^2/2\leq \cos x\leq 1$ and
$x-x^3/6\leq \sin x\leq x$, we get
	$$
	\Im I=b_jr^q+O\left(r^{q-p}\log r\right).
	$$
Using Lemma~\ref{logderivative-polynomial}, it follows that
    $$
    \Im\int_{\Gamma_1}\frac{f'(z)}{f(z)}\, dz=b_jr^q+O\left(r^{q-p}\log r\right).
    $$
Analogously, if $w_k=a_k+ib_k$, then
	$$
    \Im\int_{\Gamma_3}\frac{f'(z)}{f(z)}\, dz=-b_kr^q+O\left(r^{q-p}\log r\right).
    $$
The minus-sign here is a result of integrating in the opposite direction.
Next we apply Lemma~\ref{ld-lemma} together with Corollary~\ref{n-cor}, and obtain the estimate
	$$
	\left|\int_{\Gamma_2}\frac{f'(z)}{f(z)}\, dz +\int_{\Gamma_4}\frac{f'(z)}{f(z)}\, dz\right|
	=O\left(r^{q-p}\log^{2+\alpha}r\right).
	$$
By putting everything together, we conclude that
	\begin{equation}\label{Gl}
	\frac{1}{2\pi i}\int_{\Gamma}\frac{f'(z)}{f(z)}\, dz	=
	\frac{b_j-b_k}{2\pi}r^q+O\left(r^{q-p}\log^{2+\alpha} r\right).
	\end{equation}
	
Keeping \eqref{ArgumentP} in mind, the left-hand side of \eqref{Gl}
is the number of zeros $n(r,\Gamma)$ of $f$ in a domain bounded
by the closed curve $\Gamma$ that depends on $r$. Since counting functions are always
non-negative, it follows that $b_j-b_k=|b_j-b_k|$. The situation would
be symmetric if we would have chosen the leading coefficients $w_j$
and $w_k$ the other way around in the reasoning above. Moreover,
from \eqref{cusp}, we get $\Re{w_je^{iq\theta^*}}=\Re{w_ke^{iq\theta^*}}$,
which can be written as
    $$
    a_j\cos q\theta^*-b_j\sin q\theta^*=a_k\cos q\theta^*-b_k\sin q\theta^*.
    $$
Since we have chosen $\theta^*=0$, it follows that $a_j=a_k$.
Therefore, we have $|w_j-w_k|=|b_j-b_k|$, so that \eqref{Gl}
now reads as
	$$
	n(r,\Gamma)=
	\frac{|w_j-w_k|}{2\pi}r^q
	+O\left(r^{q-p}\log^{2+\alpha} r\right).
	$$
The assertion follows from this via Lemma~\ref{T-lemma} by writing
$\alpha =1+\veps$.

\bigskip

\noindent
\emph{J.~M.~Heittokangas}\\
\textsc{University of Eastern Finland, Department of Physics and Mathematics,
P.O.~Box 111, 80101 Joensuu, Finland}\\
\textsc{Taiyuan University of Technology,
Department of Mathematics,
Yingze West Street No.~79, Taiyuan 030024, China}\\
\texttt{e-mail:janne.heittokangas@uef.fi}
\medskip

\noindent
\emph{Z.-T.~Wen}\\
\textsc{Shantou University, Department of Mathematics, Daxue Road No.~243, Shantou 515063, China}\\
\textsc{Taiyuan University of Technology,
Department of Mathematics,
Yingze West Street No.~79, Taiyuan 030024, China}\\
\texttt{e-mail:zhtwen@stu.edu.cn}

\end{document}